\theoremstyle{definition}
\newtheorem{theorem}[equation]{Theorem}
\newtheorem{remark}[equation]{Remark} 
\newtheorem{corollary}[equation]{Corollary}
\newtheorem{lemma}[equation]{Lemma}
\begin{document}

\title[Polynomial-factorial Diophantine equations]{On the finiteness of solutions for polynomial-factorial Diophantine equations}
\author{Wataru Takeda}
\address{Department of Mathematics, Nagoya University,
Chikusa-ku, Nagoya 464-8602, Japan.}
\email{d18002r@math.nagoya-u.ac.jp}
\subjclass[2010]{11D09,11D45,11D72, 11D85}
\keywords{quadratic form, Diophantine equation, finiteness of solutions, generalized Brocard-Ramanujan problem}

\begin{abstract}
We study the Diophantine equations obtained by equating a polynomial and the factorial function, and prove the finiteness of integer solutions under certain conditions. For example, we show that there exists only finitely many $l$ such that $l!$ is represented {by} $N_A(x)$, where $N_A$ is a norm form constructed from the field norm of a field extension $K/\mathbf Q$.
We also deal with the equation $N_A(x)=l!_S$, where $l!_S$ is the Bhargava factorial. In this paper, we also show that the Oesterl\'e-Masser conjecture implies that for any infinite subset $S$ of $\mathbf Z$ and for any polynomial $P(x)\in\mathbf Z[x]$ of degree $2$ or more the equation $P(x)=l!_S$ has only finitely many solutions $(x,l)$. For some special infinite subsets $S$ of $\mathbf Z$, we can show the finiteness of solutions for the equation $P(x)=l!_S$ unconditionally.
\end{abstract}

\maketitle

\section{Introduction}
The Brocard-Ramanujan problem, which is an unsolved problem in number theory, is to find integer solutions $(x,l)$ of $x^2-1=l!$. Brocard  and Ramanujan {independently} considered this problem and conjectured that the only solutions are $(x,l)=(5,4), (11,5)$ and $(71,7)$ \cite{br1,br2,ra}. As one of its generalizations, it {has been} proposed that there are only finitely many solutions of {the} polynomial-factorial Diophantine equation \begin{equation}\label{pfe}P(x)=l!,\end{equation} where $P(x)$ is a polynomial of degree $2$ or more with integer coefficients.
{The} generalized Brocard-Ramanujan problem excludes the case $\deg P=1$. In this case, we can observe that if $a_1|a_0$ the equation $a_1x+a_0=l!$ has infinitely many solutions $(x,l)$, and otherwise has only finitely many solutions.

The Oesterl\'e-Masser conjecture, {also known as} the ABC-conjecture, implies that polynomial-factorial equations (\ref{pfe}) have only finitely many solutions. To explain the statement of the Oesterl\'e-Masser conjecture, we define the algebraic radical. For any non-zero integer $n$, the algebraic radical $\mathrm{rad}(n)$ is defined by \[\mathrm{rad}(n) =\prod_{p|n} p.\]
The Oesterl\'e-Masser conjecture states that
for any $\varepsilon>0$ there exists a positive constant $\beta(\varepsilon)$ such that \[\max\{|a|,|b|,|c|\}<\beta(\varepsilon)\mathrm{rad}(abc)^{1+\varepsilon}\] for any triples $(a,b,c)$ of non-zero coprime integers with $a+b=c$ \cite{ma,os}. In the following, we summarize {results applying this conjecture
to polynomial-factorial Diophantine equations}.

First, Overholt showed that if the weak form of Szpiro's conjecture is true, then $x^2-1=l!$ has only finitely many solutions \cite{ov93}. The weak form of Szpiro's conjecture states that there exists a positive constant ${s>6}$ such that \[|abc|<\mathrm{rad}(abc)^{s}\] for any triples $(a,b,c)$ of non-zero coprime integers with $a+b=c$. One can check that the Oesterl\'e-Masser conjecture implies this conjecture. More generally, D\k abrowski showed that if the weak form of Szpiro's conjecture is true, then {for any integer $A$ the equation} $x^2-A=l!$ has only finitely many solutions $(x,l)$ \cite{da96}. He also showed that when $A$ is not square of an
integer this result becomes unconditional.
As a generalization of these results, Luca showed that for {any} polynomial $P(x)$ with integer coefficients of degree $\ge2$, the Oesterl\'e-Masser conjecture implies that the equation $P(x)=l!$ has only finitely many solutions $(x,l)$ \cite{lu02}.

There are many unconditional results. It is known that for $m\ge3$ the equation $x^m+ y^m=l!$ has no solution with $\gcd(x,y)=1$ except for $(x,y,l)=(1,1,2)$ and $x^m-y^m=l!$ has no solution with $\gcd(x,y)=1$ except for $m=4$ \cite{eo37}. In 1973, Pollack and Shapiro showed that $x^4-1=l!$ also has no solution \cite{ps73}. If we remove the assumption $\gcd(x,y)=1$, there are infinitely many solutions $(x,y,l)$ of $x^2-y^2=l!$. In fact, for any $a\ge4$, $(x,y,l)=(\frac{a!}4+1,\frac{a!}4-1,a)$ are solutions of $x^2-y^2=l!$.

D\k abrowski and Ulas showed that for all positive integer $B$ there exist infinitely many integers $A$ such that $x^2-A=B\cdot l!$ has at least three solutions in positive integer $x$ \cite{du}.

Berend and Osgood {dealt with all polynomial $P(x)\in\mathbf Z[x]$ and} showed that for any polynomial $P$ of degree $2$ or more with integer coefficients, the equation $P(x) = l!$ has only a density $0$ set of solutions $l$ \cite{bo92}, that is, \[\lim_{n\rightarrow \infty}\frac{|\{l\le n~|~\text{there exists $x\in\mathbf Z$ such that $P(x)=l!$}\}|}n=0.\]
In 2006, Berend and Harmse considered several related problems. They showed that for any polynomial $P$ which is an irreducible polynomial or satisfies {certain technical
conditions}, there exist only finitely many solutions of $P(x)=H_l$, where $H_l$ is a highly divisible sequence \cite{bh}.
They chose the following three sequences as highly divisible sequences $H_l$,
\begin{itemize}
\item$H_l = l!$.
\item $H_l = [1, 2,\ldots,l]$,
\item[]where $[1, 2,\ldots,l]$ is the least common multiple of all positive integers less than or equal to $l$.
\item $H_l = p_1p_2 \cdots p_l$,
\item[] where $2=p_1 < p_2 <\cdots<p_l<\cdots$ is the sequence of all primes.
\end{itemize}
They also consider $H_l$ {defined by} multinomial coefficients \[\binom{al}{l,\ldots,l}=\frac{(al)!}{(l!)^a}\]for a fixed integer $a$.

In this paper, we use algebraic number theoretical approaches to consider the number of solutions for equation (\ref{pfe}). Therefore, it is appropriate for our approach to deal with the equation
\[
\sum_{i=0}^na_ix^iy^{n-i}=l!.
\]
More generally, we focus on the equation
\begin{equation}\label{hpfe}
\sum_{i=0}^na_ix^iy^{n-i}=\Pi_K(l),
\end{equation}
where $a_i\in\mathbf Z$ and $\Pi_K$ is {a} generalized factorial function over number fields. Let $K$ be a number field and $\mathcal O_K$ be its ring of integers. Then the function $\Pi_K(l)$ is defined by \[\Pi_K(l)=\prod_{\substack{\mathfrak a:\text{ideal}\\\mathfrak {Na}\le l}}\mathfrak{Na},\]
where $\mathfrak{Na}=\#\mathcal O_K/\mathfrak a$.

Instead of studying the number of $(x,y,l)$, we study the number of $l$ {for} which there exists a pair $(x,y)$ such that $a_nx^n+\cdots+a_0y^{n}=\Pi_K(l)$ {by} the following reasons. It is known that when $d$ is not a square integer $x^2-dy^2=1$ has infinitely many solutions from {the} theory of Pell's equation. Therefore, we can find $x^2-dy^2=l!$ has infinitely many solutions $(x,y,l)$ easily. To consider the relation between integers represented {by} polynomial and those of factorial, we consider the number of $l$ for which there exists a pair $(x,y)$ such that $a_nx^n+\cdots+a_0y^{n}=\Pi_K(l)$.
In the case $K=\mathbf Q$ and $y=1$, equation (\ref{hpfe}) is reduced to {the} generalized Brocard-Ramanujan's equation (\ref{pfe}).
Therefore, it is expected that our results give some improvement of {the} generalized Brocard-Ramanujan problem. For the equation $a_nx^n+\cdots+a_0y^{n}=l!$, we show that
\begin{theorem}
Let $F(x,y)$ be a homogeneous irreducible polynomial with $\deg F\ge2$, then there exist only finitely many $l$ such that $l!$ is represented {by} $F(x,y)$.
\end{theorem}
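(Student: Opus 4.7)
The plan is to identify a local obstruction at well-chosen primes $p$ and then combine Jordan's theorem on permutation groups with Chebotarev's density theorem to guarantee such a prime in the window $(l/2,l]$, where $v_p(l!)$ is forced to equal $1$.

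Since $F(x,y)$ is irreducible and homogeneous of degree $n$, neither $x$ nor $y$ divides it, so the dehomogenization $f(T):=F(T,1)$ is an irreducible polynomial of degree $n$ in $\mathbf Z[T]$ with nonzero leading coefficient $a_n$. Let $\alpha$ be a root of $f$, $K=\mathbf Q(\alpha)$, and let $L$ be the Galois closure of $K$ over $\mathbf Q$, with $G=\mathrm{Gal}(L/\mathbf Q)$ viewed as a transitive subgroup of $S_n$. The key local lemma I would prove is the following: if $p$ does not divide $a_n\cdot\mathrm{disc}(f)$ and $f\bmod p$ has no root in $\mathbf F_p$, then $v_p(F(x,y))\in n\mathbf Z_{\ge0}$ for every $(x,y)\in\mathbf Z^2\setminus\{(0,0)\}$. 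Indeed, if $p\mid F(x,y)$ with $(x,y)\not\equiv(0,0)\pmod p$, then $(x:y)$ would be a projective root of $F\bmod p$, and hence (using $p\nmid a_n$) a root of $f\bmod p$, contrary to hypothesis; thus $p\mid\gcd(x,y)$ and $F(x,y)=p^n F(x/p,y/p)$. Iterating yields $v_p(F(x,y))=n\cdot\min(v_p(x),v_p(y))$.

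To produce a supply of such primes, I invoke Jordan's classical theorem: any transitive subgroup of $S_n$ with $n\ge2$ contains a derangement $\sigma$. Pick such $\sigma\in G$ and let $\mathcal C$ be its conjugacy class. By Chebotarev's density theorem, the set $\mathcal P$ of rational primes whose Frobenius in $L/\mathbf Q$ lies in $\mathcal C$ has positive density, and for all but finitely many $p\in\mathcal P$ the factorization of $f\bmod p$ mirrors the cycle type of $\sigma$ and hence has no linear factor. The local lemma therefore applies to every sufficiently large $p\in\mathcal P$.

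Now suppose $F(x,y)=l!$ has a solution. Then $n\mid v_p(l!)$ for every large $p\in\mathcal P$. Combining the unconditional form of Chebotarev (with its $o(\pi(l))$ error term) with the prime number theorem, the interval $(l/2,l]$ contains a prime $p\in\mathcal P$ once $l$ is sufficiently large; for any such $p$, Legendre's formula gives $v_p(l!)=\lfloor l/p\rfloor=1$, so $n\mid 1$, contradicting $n\ge2$. Hence only finitely many $l$ can yield a solution. The main step requiring care is the short-interval version of Chebotarev used in the last paragraph; the conceptual heart, however, is in recognizing the right local invariant (the $n$-divisibility of $v_p$ at ``no-root'' primes) and using Jordan's theorem to guarantee a positive density of such primes.
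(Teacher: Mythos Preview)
Your proposal is correct and follows essentially the same route as the paper: the paper's Lemma~2.1 is exactly your Jordan-type derangement statement, its Lemma~3.1 is your local $n$-divisibility lemma at ``no-root'' primes, and its Theorem~3.7 supplies the Bertrand-type estimate for primes in a fixed Chebotarev class that you obtain instead from the asymptotic form of Chebotarev together with the prime number theorem. The only cosmetic difference is that the paper chains intervals $(p,2p)$ inductively while you argue directly with $(l/2,l]$; both amount to the same thing.
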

As a corollary of this theorem, we obtain {the result of} Berend and Harmse \cite{bh} for irreducible polynomials. For reducible polynomials, we prove the following result.
\begin{theorem}
\label{bbb}
Let $F(x,y)$ a polynomial in $\mathbf Z[x,y]$ whose irreducible factorization is \[F(x,y)=\prod_{j=1}^uF_j(x,y).\]
Assume that there exist positive integers $a$ and $b>1$ such that \[\bigcap_{i} \mathbf P(\mathcal C_{F_i})\supset\{p:\text{prime}~|~p\equiv a\mod b\}.\]
Then there exist only finitely many $l$ such that $l!$ is represented {by} $F(x,y)$.
\end{theorem}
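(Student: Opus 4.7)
The plan is to argue by contradiction: I suppose that infinitely many $l$ satisfy $l!=F(x_l,y_l)$ for some $(x_l,y_l)\in\mathbf Z^2$, and aim to exhibit a prime whose congruence class violates the hypothesis on $\bigcap_i \mathbf P(\mathcal C_{F_i})$. The opening move is a primitivity reduction. Setting $d_l=\gcd(x_l,y_l)$ and $x_l=d_lx_l'$, $y_l=d_ly_l'$ with $\gcd(x_l',y_l')=1$, homogeneity of each irreducible factor yields
\[
l!=d_l^{\deg F}\prod_{j=1}^u F_j(x_l',y_l').
\]
For any prime $p$ with $l/2<p\le l$, Legendre's formula gives $v_p(l!)=1$; since $\deg F\ge 2$ this forces $p\nmid d_l$, and so $p$ divides exactly one of the factors $F_{j_0}(x_l',y_l')$. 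Because $\gcd(x_l',y_l')=1$, the reduction of $(x_l',y_l')$ modulo $p$ is a non-zero pair in $\mathbf F_p^2$, so $F_{j_0}$ admits a non-trivial zero modulo $p$.

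Interpreting $\mathbf P(\mathcal C_{F_i})$ as the set of rational primes at which $F_i$ has no non-trivial zero (the reading consistent with the curve-theoretic notation $\mathcal C_{F_i}$), this places $p$ outside $\mathbf P(\mathcal C_{F_{j_0}})$ and \emph{a fortiori} outside $\bigcap_i \mathbf P(\mathcal C_{F_i})$. Combining with the hypothesis, no prime $p\equiv a\pmod b$ in the window $(l/2,l]$ may divide $l!$. Under the implicit assumption $\gcd(a,b)=1$ (without which the containment is trivially about a finite set), the prime number theorem in arithmetic progressions yields
\[
\pi(l;b,a)-\pi(l/2;b,a)\sim\frac{l}{2\phi(b)\log l}\longrightarrow\infty,
\]
so for $l$ sufficiently large the interval $(l/2,l]$ contains a prime $p\equiv a\pmod b$; such a prime manifestly divides $l!$, which is the desired contradiction.

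I expect the principal delicacy to lie not in the arithmetic but in pinning down the intended meaning of $\mathbf P(\mathcal C_{F_i})$ that makes the hypothesis match the conclusion. The plan above hinges on the interpretation $p\in\mathbf P(\mathcal C_{F_i})$ iff $F_i$ has no non-trivial zero modulo $p$. Once this is granted, the remaining steps---Legendre's formula to clear out $d_l$, the primitivity consequence of $\gcd(x_l',y_l')=1$, and PNT in arithmetic progressions---are routine.
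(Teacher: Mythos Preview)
Your argument is correct and matches the paper's approach. In the paper this statement is obtained as the special case $K=\mathbf Q$ of Theorem~\ref{fin}: for a prime $p\in\bigcap_i\mathbf P(\mathcal C_{F_i})$ (coprime to $a_n\Delta_{F,\mathrm{mod}}$ or $a_0\Delta_{F,\mathrm{mod}}$), Theorem~\ref{12} forces $\deg F\mid v_p(F(x,y))$, while $v_p(l!)=1$ for $p\le l<2p$; one then chains intervals $(p,2p)$ using a Bertrand-type estimate for primes $\equiv a\bmod b$. Your primitivity reduction is exactly the content of the proof of Theorem~\ref{12} (via Lemma~\ref{123}), just unpacked inline, and your appeal to the prime number theorem in progressions plays the same role as the paper's Bertrand-type input. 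Two cosmetic remarks: the paper's definition of $\mathbf P(\mathcal C_{F_i})$ is via Frobenius cycle types rather than ``no non-trivial zero,'' and these agree only away from the finitely many primes dividing $a_n\Delta_{F,\mathrm{mod}}$ or $a_0\Delta_{F,\mathrm{mod}}$---but since you work with $p>l/2\to\infty$ this is harmless; and you are implicitly using that $F$ is homogeneous (as in the paper's standing hypothesis) when you pull out $d_l^{\deg F}$.
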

We explain the definition of $\mathbf P(\mathcal C_{F_i})$ in Section 4, roughly speaking, $\mathbf P(\mathcal C_{F_i})$ is the set of all primes $p$ such that $F_i(x,1)\equiv0\mod p$ has no solution in $\mathbf Z/p\mathbf Z$.

Taking $y=1$ in Theorem \ref{bbb}, we get the result of Berend and Harmse for reducible polynomials partially.
As {in} their results, we can replace $l!$ with the above highly divisible sequences $H_l$.
Theorem \ref{bbb} is one of corollaries of the  folloing result for equation (\ref{hpfe}).
\begin{theorem}
\label{main2}
Let $K$ be a Galois extension field over $\mathbf Q$ and $F(x,y)$ a polynomial in $\mathbf Z[x,y]$ whose irreducible factorization is \[F(x,y)=\prod_{j=1}^uF_j(x,y)^{m_j}.\]
Assume that there exist a conjugacy class $C$ of ${\rm Gal}(K/\mathbf Q)$, positive integers $a$ and $b>1$ such that $\mathbf P(C)\cap \bigcap_{i} \mathbf P(\mathcal C_{F_i})\supset\{p:\text{prime}~|~p\equiv a\mod b\}$.
If $\deg F$ does not divide $[K:\mathbf Q]$ then there exist only finitely many $l$ such that $\Pi_K(l)$ is represented {by} $F(x,y)$.
\end{theorem}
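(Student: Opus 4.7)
The plan is, for every sufficiently large $l$ that might admit a solution $(x,y)$ to $F(x,y)=\Pi_K(l)$, to exhibit a single prime $p\equiv a\pmod b$ at which the $p$-adic valuations of the two sides can agree only if $\deg F\mid[K:\mathbf Q]$, contradicting the hypothesis. Let $f$ denote the common order of the elements of the conjugacy class $C$; for any unramified prime $p$ with $\mathrm{Frob}_p\in C$, one has $p\mathcal O_K=\mathfrak p_1\cdots\mathfrak p_g$ with $g=[K:\mathbf Q]/f$ and $\mathfrak N\mathfrak p_i=p^f$. After discarding a finite set of primes (those ramifying in $K$ and those dividing the leading coefficient of some $F_j(x,1)$), every prime $p\equiv a\pmod b$ above this threshold lies in $\mathbf P(C)\cap\bigcap_i\mathbf P(\mathcal C_{F_i})$ and has this uniform splitting.

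The central computation is of $v_p(\Pi_K(l))$ for $l$ in the window $p^f\le l<2p^f$. Any ideal $\mathfrak a$ with $\mathfrak N\mathfrak a\le l$ factors as $\mathfrak a=\mathfrak p_1^{e_1}\cdots\mathfrak p_g^{e_g}\mathfrak b$ with $\mathfrak b$ coprime to $p$, and the inequality $p^{f(e_1+\cdots+e_g)}\mathfrak N\mathfrak b\le l<2p^f$ forces $\sum e_i\le 1$, with equality permitted only when $\mathfrak b=(1)$. The only contributing ideals are thus $\mathfrak p_1,\ldots,\mathfrak p_g$, each supplying a factor $p^f$. Hence $v_p(\Pi_K(l))=gf=[K:\mathbf Q]$ \emph{exactly}.

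For the left-hand side, the valuation $v_p(F(x,y))$ is always a multiple of $\deg F$ at such a prime $p$. Since $p\in\mathbf P(\mathcal C_{F_j})$, the reduction $F_j(t,1)\bmod p$ has no root, while the leading coefficient of $F_j(x,1)$ is nonzero mod $p$; consequently $p\mid F_j(x,y)$ forces $p\mid\gcd(x,y)$ (otherwise substituting $t=xy^{-1}$ produces a root, or reducing to the leading monomial $a_n x^{\deg F_j}$ contradicts $p\nmid a_n$). Setting $m=\min(v_p(x),v_p(y))$ and factoring out $p^m$ from each variable, homogeneity gives $v_p(F_j(x,y))=m\deg F_j$, whence $v_p(F(x,y))=\sum_j m_j\cdot m\deg F_j=m\cdot\deg F$.

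Assembling these pieces: if $F(x,y)=\Pi_K(l)$ for an admissible prime $p$ satisfying $p^f\le l<2p^f$, then $\deg F\mid[K:\mathbf Q]$, contradicting the standing assumption. It remains to produce such a $p$ for every sufficiently large $l$; since the hypothesis is vacuous unless $\gcd(a,b)=1$, the prime number theorem for arithmetic progressions yields asymptotically $\frac{f(1-2^{-1/f})\,l^{1/f}}{\phi(b)\log l}$ primes $p\equiv a\pmod b$ in the window $((l/2)^{1/f},l^{1/f}]$, which tends to infinity. I expect the main obstacle to be the precise ideal-counting step pinning $v_p(\Pi_K(l))$ to $[K:\mathbf Q]$ on the nose in a narrow window of $l$; it is precisely the Galois-theoretic input $\mathrm{Frob}_p\in C$, forcing uniform splitting with a common residue degree $f$, that makes the factor $[K:\mathbf Q]$ appear exactly and renders the divisibility clash with $\deg F\nmid[K:\mathbf Q]$ unavoidable.
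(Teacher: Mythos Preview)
Your argument is correct and follows the same strategy as the paper: for primes $p\equiv a\pmod b$ with $p^f\le l<2p^f$ one has $v_p(\Pi_K(l))=[K:\mathbf Q]$ exactly, while $v_p(F(x,y))\in(\deg F)\mathbf Z$, yielding the forbidden divisibility. The only differences are cosmetic: you reprove the paper's divisibility lemma inline (when discarding bad primes you should also throw out those dividing $\Delta_{F_j}$, not just the leading coefficients, so that membership in $\mathbf P(\mathcal C_{F_j})$ genuinely forces $F_j(t,1)\bmod p$ to have no root), and you locate a prime in each window via the prime number theorem in arithmetic progressions rather than chaining Bertrand-type intervals inductively as the paper does.
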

Moreover, for special quadratic forms $F(x,y)$ there exist infinitely many $l$ with $\Pi_K(l)$ represented {by} $F$.

In Section 2, we review some of the standard facts on algebraic number theory. First, we check that for any Galois group $G\not=\{1\}$ of splitting field of polynomial $P$, there exists an element such that it fixes no roots of $P$. This fact is very important to prove our theorems. Second, we review the Frobenius map of prime $p$. This map controls the irreducible factorization modulo $p$ of the polynomial corresponding to itself.

In Section 3, we introduce two auxiliary lemmas. The first one characterizes the prime factorization of integers which can be written as $F(x,y)$.
This is one of the generalizations of Cho's results \cite{ch14,ch16}. Cho characterized the prime factorization of integers written as $x^2+ny^2$ or $x^2+xy+ny^2$ with congruent condition for $n\ge1$.
The second one gives a Bertrand type estimate for prime ideals corresponding to a conjugacy class of Galois group such that their ideal norm is of the form $p^f$.
In a previous paper \cite{ta18}, we considered a Bertrand type estimate for primes splitting completely, which is the simplest case of the second auxiliary lemma.
This result plays a crucial role in proving our main theorem.

In Section 4, we give a necessary condition for the existence of infinitely many solutions of $F(x,y)=\Pi_K(l)$. As a corollary, we obtain the result of Berend and Harmse for irreducible polynomials. Also, we gave their result for reducible polynomials partially.
Moreover, for special quadratic forms $F(x,y)$ there exists infinitely many $l$ such that $\Pi_K(l)$ is represented {by} $F$.

In Section 5, we consider a generalization of the result in Section 4 to multi-variable homogeneous polynomials. However, irreducible polynomial $x^2+y^2+z^2+w^2$ represents any positive integers, that is, there exists infinitely many $l$ such that $H_l$ is represented {by} $x^2+y^2+z^2+w^2$. Therefore, the simplest generalization of the result of Section 4 does not hold. In place of multi-variable homogeneous polynomials, we deal with norm forms of number fields. We can apply the same argument {as in} the proof of the result in Section 4 and obtain the finiteness of $l$ such that $H_l$ is represented {by} a norm form $N_{\alpha_1,\ldots,\alpha_n}(x_1,\ldots,x_n)$ except for the case $n=1$. Since $N_{\alpha_1}(x)=x$, the equation $N_{\alpha_1}(x)=H_l$ has infinitely many solutions $(x,l)$. After that, we consider the equation $N_{\alpha_1,\ldots,\alpha_n}(x_1,\ldots,x_n)=l!_S$, where $l!_S$ is the Bhargava factorial for $S\subset \mathbf Z$. Since the Bhargava factorial $l!_{\mathbf Z}$ is the ordinary factorial $l!$, we can regard this equation as one of the generalizations of the Brocard-Ramanujan problem. We point out that we can generalize Luca's result, which states that the Oesterl\'e-Masser conjecture implies the finiteness of solution of the equation $P(x)=l!$, to the equation $P(x)=l!_S$ by following Luca's proof. Also, we show the finiteness of solution of the equation $N_{\alpha_1,\ldots,\alpha_n}(x_1,\ldots,x_n)=l!_{S(a,b,c)}$ for $S(a,b,c)=\{an^2+bn+c~|~n\in\mathbf Z\}$ with $(a,b,c)\in\mathbf{Z}^3$ and $ab\neq0$.


\section{Preliminaries on algebraic number theory}
We recall some basic definitions and some propositions of algebraic number theory in this section. Let $P(x)=a_nx^{n}+\cdots+a_0\in\mathbf Z[x]$ be an irreducible polynomial with $\deg P=n$ and discriminant $\Delta_P$. When $\alpha_1,\ldots,\alpha_n$ are roots of $P$, the splitting field $K_P$ of $P$ is $\mathbf Q(\alpha_1,\ldots,\alpha_n)$. It is known that $K_P/\mathbf Q$ is a Galois extension and the Galois closure of $\mathbf Q(\alpha_i)/\mathbf Q$ for all $i=1,\ldots,n$. The Galois group $G_P$ of $K_P/\mathbf Q$ plays a crucial role for splitting of primes in $\mathbf Q(\alpha_i)$ as follows.

We call a subgroup $H$ of $S_n$ transitive if the group orbit $H(i)=\{\sigma(i)~|~\sigma\in H\}$ is equal to $\{1,\ldots,n\}$ for $1\le i\le n$.
From Galois Theory, {the} Galois group $G_P$ can be identified with a transitive subgroup $H$ of the symmetric group $S_n$ of degree $n$. For $\sigma\in H$, the cycle type of $\sigma$ is defined as the ascending ordered list $[f_1,\ldots,f_r]$ of the sizes of the cycles in the cycle decomposition of $\sigma$. For example, the cycle type of $(1\ 2)(3\ 4)(6\ 7\ 8)=(1\ 2)(3\ 4)(5)(6\ 7\ 8)(9)\in H\subset S_{9}$ is $[1,1,2,2,3]$. Since if two permutations are conjugate in $H$ then they have the same cycle type, we can define the cycle type of conjugacy class $C=[\sigma]$ of $H$ by the cycle type of a representative $\sigma$. We introduce a lemma for transitive subgroups of the symmetric group $S_n$.
\begin{lemma}
\label{class}
Let $H$ be a transitive subgroup of the symmetric group $S_n$ of degree $n\ge2$. Then there exists an element $\sigma\in H$ such that $\sigma(i)\not=i$ for all $i=1,\ldots,n$.
\end{lemma}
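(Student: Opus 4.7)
The plan is to apply Burnside's counting lemma (the Cauchy--Frobenius orbit-counting formula) to the natural action of $H$ on the set $\{1,\ldots,n\}$. For $\sigma\in H$ let $\mathrm{Fix}(\sigma)=\{i:\sigma(i)=i\}$. Burnside's lemma asserts that the number of orbits of $H$ on $\{1,\ldots,n\}$ equals
\[
\frac{1}{|H|}\sum_{\sigma\in H}|\mathrm{Fix}(\sigma)|.
\]
Since $H$ is transitive, the left-hand side is $1$, so $\sum_{\sigma\in H}|\mathrm{Fix}(\sigma)|=|H|$.

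Next I would isolate the contribution of the identity. Because $|\mathrm{Fix}(e)|=n$, summing over the non-identity elements gives
\[
\sum_{\sigma\in H\setminus\{e\}}|\mathrm{Fix}(\sigma)|=|H|-n\le|H|-2,
\]
where the last inequality uses the hypothesis $n\ge2$. Now assume for contradiction that every $\sigma\in H$ fixes at least one point of $\{1,\ldots,n\}$. Then in particular $|\mathrm{Fix}(\sigma)|\ge1$ for each of the $|H|-1$ non-identity elements, which would force $\sum_{\sigma\neq e}|\mathrm{Fix}(\sigma)|\ge|H|-1$, contradicting the bound just obtained. Hence some $\sigma\in H$ (necessarily different from $e$) satisfies $\mathrm{Fix}(\sigma)=\emptyset$, i.e., $\sigma(i)\neq i$ for all $i$.

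I do not anticipate a serious obstacle; the argument is essentially a one-line consequence of Burnside's lemma once one observes that transitivity produces exactly one orbit and that the identity alone already accounts for $n\ge2$ fixed points. The only thing to be careful about is the inequality bookkeeping: the identity contributes $n$, not $1$, to the fixed-point sum, and it is precisely this ``surplus'' of $n-1$ that must be absorbed by some derangement in $H$.
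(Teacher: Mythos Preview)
Your proof is correct and is essentially the classical counting argument the paper also uses, just organized dually. The paper sums over points: by orbit--stabilizer and transitivity each stabilizer $H_i$ has size $|H|/n$, and since the identity lies in every $H_i$ the union of stabilizers has at most $|H|-(n-1)<|H|$ elements. You sum over group elements via Burnside's lemma to get $\sum_\sigma|\mathrm{Fix}(\sigma)|=|H|$ and subtract the identity's contribution of $n$. Both routes amount to the same double count $\sum_i|H_i|=\sum_\sigma|\mathrm{Fix}(\sigma)|=|H|$ and finish with the same inequality $|H|-n<|H|-1$; neither is more general or more elementary than the other here.
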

\begin{proof}
Let $H_i$ be the stabilizer subgroup of $H$ with respect to $i$. Then the orbit-stabilizer theorem and transitivity of $H$ leads to $|H_i|=|H|/n$. Now we consider the number of elements of the set \[S=\{\sigma~|~\text{there exists $i$ such that $\sigma(i)=i$}\}.\] Since the identity element belongs to all stabilizer $H_i$, we have
\[|S|\le\sum_{i=1}^n |H_i|-(n-1)=|H|-n+1<|H|.\]
Therefore, there exists an element $\sigma\in H$ such that $\sigma(i)\not=i$ for all $i=1,\ldots,n$.
\end{proof}
This lemma ensures that for any Galois group $G\not=\{1\}$ of an irreducible polynomial of degree $\ge2$ contains an element that does not fix any of the roots.

Next, we review the definitions and properties of the Frobenius map.
Let $p$ be a prime and $\mathfrak P$ {a} prime ideal of $\mathcal O_{K_P}$ lying above $p$. For {a} prime ideal $\mathfrak P$ in $\mathcal O_{K_P}$, we define the decomposition group $D_{\mathfrak P}$ of $\mathfrak P$ by $\{\sigma\in G_P~|~\sigma(\mathfrak P)=\mathfrak P\}$. Since $\sigma(\mathfrak P)=\mathfrak P$ and $\sigma(\mathcal O_{K_P})=\mathcal O_{K_P}$ for $\sigma\in D_{\mathfrak P}$,
$\sigma$ induces an automorphism $\overline{\sigma}$ of $\mathcal O_{K_P}/\mathfrak P$ over $\mathbf Z/p\mathbf Z$. Now we consider the Galois group ${\rm Gal}((\mathcal O_{K_P}/\mathfrak P)/(\mathbf Z/p\mathbf Z))$.  It is known that this group is cyclic and there exists an unique automorphism $\sigma: x\rightarrow x^{p}$ which generates it.
Then the Frobenius map $(p,K_P/\mathbf Q)$ of $p$ is the image of $\sigma$ in Galois group $G_P$. If the Frobenius map $(p,K_P/\mathbf Q)$ of $p$ belongs to a conjugacy class $C$ of $G_P$, then we say that $p$ corresponds to $C$.
We denote the set of primes corresponding to $C\in\mathcal C$ by $\mathbf P(\mathcal C)$. The following theorem gives a relation between the cycle type of {the} Frobenius map of $p$ and the monic irreducible factorization of $P(x) \mod p$, where $p$ does not divide $a_n\Delta_P$.
\begin{theorem}[Frobenius]
\label{frob}
Let $p$ be a prime such that $p$ does not divide $a_n\Delta_P$. We denote the cycle type of the Frobenius map $(p,K_P/\mathbf Q)$ of $p$ by $[f_1,\ldots,f_r]$. Then the monic irreducible factorization of $P(x) \mod p$ is $P(x)\equiv a_nP_1(x)\cdots P_ r(x)\mod p$,
where $P_i(x)$ are distinct and $f_i = \deg P_i(x)$.
\end{theorem}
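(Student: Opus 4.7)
The plan is to translate the cycle decomposition of the Frobenius, viewed as a permutation of the roots $\alpha_1,\ldots,\alpha_n$ of $P$, into the irreducible factorization of $\bar P(x):=P(x)\bmod p$, by reducing everything modulo a fixed prime $\mathfrak P\subset\mathcal O_{K_P}$ above $p$ and using that reduction intertwines the arithmetic Frobenius $\sigma\in G_P$ with the residue-field Frobenius $x\mapsto x^{p}$.

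First I would exploit the hypothesis $p\nmid a_n\Delta_P$. Since $a_n$ is a unit mod $p$, $\bar P$ has degree exactly $n$, and since $\Delta_P$ is a unit mod $p$, $\bar P$ is separable. Consequently the reductions $\bar\alpha_1,\ldots,\bar\alpha_n$ in the residue field $k(\mathfrak P):=\mathcal O_{K_P}/\mathfrak P$ are $n$ pairwise distinct elements, and
\[
\bar P(x)=a_n\prod_{i=1}^{n}(x-\bar\alpha_i)\quad\text{in }k(\mathfrak P)[x].
\]

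Second I would use the defining property of $\sigma=(p,K_P/\mathbf Q)\in D_{\mathfrak P}$, namely that its induced action $\bar\sigma$ on $k(\mathfrak P)$ is $x\mapsto x^{p}$. This gives $\overline{\sigma(\alpha_i)}=\bar\alpha_i^{\,p}$ for every $i$. If $(\alpha_{i_1}\,\alpha_{i_2}\,\cdots\,\alpha_{i_{f_j}})$ is a cycle of $\sigma$ of length $f_j$, iterating the identity shows that $\{\bar\alpha_{i_1},\ldots,\bar\alpha_{i_{f_j}}\}$ is the full orbit of $\bar\alpha_{i_1}$ under the absolute Frobenius of $\overline{\mathbf F_p}$. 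Separability forces this orbit to have size exactly $f_j$, so $\bar\alpha_{i_1}$ generates $\mathbf F_{p^{f_j}}$ over $\mathbf F_p$ and lies in no proper subfield; its minimal polynomial over $\mathbf F_p$ is therefore the degree-$f_j$ irreducible polynomial
\[
P_j(x):=\prod_{k=1}^{f_j}(x-\bar\alpha_{i_k})\in\mathbf F_p[x].
\]

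Finally I would assemble the $r$ factors $P_1,\ldots,P_r$ coming from the $r$ cycles to recover $\bar P=a_nP_1\cdots P_r$ with $\sum_j\deg P_j=n$; the $P_j$ are pairwise distinct because their root sets partition the $n$ distinct elements $\{\bar\alpha_i\}$. The only subtlety — not really an obstacle, but worth noting — is the dependence on the choice of $\mathfrak P$ above $p$: replacing $\mathfrak P$ by another prime above $p$ conjugates $\sigma$ within $G_P$, which preserves its cycle type, so both the hypothesis on the conjugacy class of the Frobenius and the resulting factorization of $\bar P$ depend only on $p$.
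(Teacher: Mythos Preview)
The paper does not actually supply a proof of this statement: it is quoted as a classical theorem of Frobenius and invoked as background in Section~2, with no argument given. Your write-up is the standard (and correct) proof, so there is nothing to compare against. One small point worth making explicit is that the $\alpha_i$ need not lie in $\mathcal O_{K_P}$ when $a_n\neq\pm1$; the hypothesis $p\nmid a_n$ is exactly what lets you reduce them modulo $\mathfrak P$ (e.g.\ via $a_n\alpha_i\in\mathcal O_{K_P}$ with $a_n$ a unit in $k(\mathfrak P)$), and together with $p\nmid\Delta_P$ it also guarantees that $p$ is unramified in $K_P$ so that the Frobenius conjugacy class is well defined. With that understood, your argument is complete.
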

\section{Auxiliary lemmas}
Let $F(x,y)=a_nx^{n}+a_{n-1}x^{n-1}y+\cdots+a_0y^{n}$ be an irreducible homogeneous polynomial {(i.e. binary form)} and $K_F$ the splitting field of $F(x,1)$. Also, we define the modified discriminant $\Delta_{F,mod}$ by \[\Delta_{F,mod}=\frac {\Delta_{F}}{\gcd(a_n,\ldots,a_0)^{2n-2}},\]
where $\Delta_F$ is the discriminant of $F(x,1)$. 
Cho {studied} the representation of integers as $x^2+ny^2$ or $x^2+xy+ny^2$ for $n\ge1$ under the congruence conditions $x\equiv1\mod m$ and $y\equiv 0\mod m$ \cite{ch14,ch16}. We remove this congruence conditions and consider all polynomials with integer coefficients.  Let $\mathcal C_F$ be the set of conjugacy classes $C$ of the Galois group $G_F={\rm Gal}(K_F/\mathbf Q)$ whose cycle type $[f_1,\ldots,f_r]$ satisfies $f_i\ge2$ for all $i=1,\ldots,r$. This classification is very important to characterize integers represented {by} $F(x,y)$.

\begin{lemma}
\label{123}
Let $F(x,y)=a_nx^{n}+a_{n-1}x^{n-1}y+\cdots+a_0y^{n}$ be an irreducible homogeneous polynomial with $g=\gcd(a_n,a_{n-1},\ldots,a_0)$.
Let $N$ be an integer with \[N=gp_1\cdots p_sq_1^{l_1}\cdots q_t^{l_t},\]
where $q_i$ are primes corresponding to a conjugacy class $C\in\mathcal C_F$ with\\ $\gcd(q_i,a_n\Delta_{F,mod})=1$ or $\gcd(q_i,a_0\Delta_{F,mod})=1$ and $p_i$ are the other primes.
If $N$ is represented {by} $F(x,y)$ then $n|l_i$ for all $i$.
\end{lemma}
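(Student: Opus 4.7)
The plan is a $q$-adic descent argument. Fix an index $i$ and write $q=q_i$, $l=l_i$. My first step is to normalize so that $q$ is coprime to $a_n\Delta_{F,mod}$: if instead only the hypothesis $\gcd(q,a_0\Delta_{F,mod})=1$ holds, I would replace $F(x,y)$ by the reciprocal form $\widetilde F(x,y)=F(y,x)$. This exchanges the roles of $a_n$ and $a_0$ while preserving the splitting field and the discriminant, because $\widetilde F(x,1)$ has roots $\{1/\alpha_j\}$ on which the Galois group acts by the same permutations as on $\{\alpha_j\}$; hence $\mathcal C_{\widetilde F}=\mathcal C_F$ and the relation $F(x_0,y_0)=N$ becomes $\widetilde F(y_0,x_0)=N$. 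Under either hypothesis $q$ fails to divide at least one of $a_n, a_0$, so $q\nmid g$ and therefore $v_q(N)=l$.

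Next I would prove the key claim: if $F(x_0,y_0)=N$, then $q\mid x_0$ and $q\mid y_0$. Here I invoke Theorem~\ref{frob}: because $q\nmid a_n\Delta_F$, the monic factorization of $F(x,1)\bmod q$ mirrors the cycle type of the Frobenius class of $q$, which by assumption lies in $\mathcal C_F$, so every irreducible factor has degree $\ge 2$ and $F(x,1)$ has no root in $\mathbf Z/q\mathbf Z$. If $q\nmid y_0$ then $y_0$ is invertible modulo $q$, and homogeneity gives $F(x_0,y_0)\equiv y_0^{\,n}F(x_0y_0^{-1},1)\pmod q$, forcing $F(x,1)$ to vanish at $x_0y_0^{-1}$, a contradiction. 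Hence $q\mid y_0$, which reduces the congruence to $F(x_0,y_0)\equiv a_nx_0^{\,n}\pmod q$, and $q\nmid a_n$ now yields $q\mid x_0$.

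With both $x_0$ and $y_0$ divisible by $q$, the descent is immediate: writing $x_1=x_0/q$ and $y_1=y_0/q$, the degree-$n$ homogeneity of $F$ gives $F(x_0,y_0)=q^n F(x_1,y_1)$, so $v_q\bigl(F(x_1,y_1)\bigr)=l-n$. If $l-n>0$ the previous paragraph reapplies verbatim to $(x_1,y_1)$ and produces another descent; iterating yields a strictly decreasing sequence of non-negative valuations $l,l-n,l-2n,\dots$ that can only terminate at $0$, so $n\mid l$.

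I expect the main technical care to go into the symmetry reduction of the first paragraph --- checking that the reciprocal polynomial $\widetilde F$ really does share the relevant Galois data with $F$, so that the hypothesis $C\in\mathcal C_F$ carries over intact, and that the factor $g$ of $N$ contributes nothing to $v_q(N)$. Once that is in place, the combination of Theorem~\ref{frob} with the homogeneity of $F$ reduces the lemma to the mechanical descent of the last paragraph.
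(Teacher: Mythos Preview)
Your proof is correct and follows essentially the same route as the paper's: use Theorem~\ref{frob} to see that $F(x,y)\equiv 0\pmod q$ has only the trivial solution in $\mathbf F_q\times\mathbf F_q$, deduce $q\mid x_0$ and $q\mid y_0$, and then descend by homogeneity. The only cosmetic differences are that the paper normalizes to $g=1$ at the outset (where you instead observe $q\nmid g$), and handles the two hypotheses $\gcd(q,a_n\Delta_{F,mod})=1$ versus $\gcd(q,a_0\Delta_{F,mod})=1$ in a single breath by writing the factorization as $aF_1\cdots F_r$ with $a\in\{a_n,a_0\}$, rather than passing explicitly to the reciprocal form as you do.
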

\begin{proof}
An integer $N$ is represented {by} $a_nx^{n}+a_{n-1}x^{n-1}+\cdots+a_0y^{n}$ with $\gcd(a_n,\ldots,a_0)=g$ if and only if $\frac Ng$ is represented {by} $\frac{a_n}gx^{n}+\cdots+\frac{a_0}gy^{n}$ with $\gcd(\frac{a_n}g,\ldots,\frac{a_0}g)=1$. Hence, we may assume without loss of generality that $\gcd(a_n,a_{n-1},\ldots,a_0)=1$ and $\Delta_{F}=\Delta_{F,mod}$ .

Let $q$ be a prime corresponding to a conjugacy class $C\in\mathcal C_F$ with\\ $\gcd(q_i,$ $a\Delta_{F,mod})=1$, where $a\in\{a_n,a_0\}$. By the definition of $\mathcal C_F$, the cycle type $[f_1,\ldots,f_r]$ of $C$ satisfies $f_i\ge2$ for all $i=1,\ldots,r$. As we seen in Lemma \ref{frob}, the monic irreducible factorization of $F(x,y)\mod q$ is \[aF_1(x,y)\cdots F_ r(x,y)\mod q,\]
where the $F_i(x,y)$ are distinct and $f_i = \deg F_i(x,y)$. Since $\deg F_i(x,y)\ge2$ for all $i=1,\ldots,r$, it {follows} that $F(x,y) \mod q$ has no solution expect for $(0,0)$ in $\mathbf F_q\times\mathbf F_q$. Therefore, if $q|N$ then $q$ divides both of $x$ and $y$. Thus $q^n|N$. {We can apply} the same argument repeatedly {to $F(\frac x{q^n},\frac y{q^n})=\frac N{q^{n}}$, which} leads to $n|l_i$.

This proves the lemma.
\end{proof}
As one of the corollaries of Lemma \ref{123}, we obtain the following theorem.

\begin{theorem}
\label{12}
Let $F(x,y)=a_nx^{n}+a_{n-1}x^{n-1}y+\cdots+a_0y^{n}$ be a homogeneous polynomial whose irreducible factorization is \[F(x,y)=\prod_{j=1}^uF_j(x,y)\] and $g=\gcd(a_n,a_{n-1},\ldots,a_0)$.
Let $N$ be an integer with \[N=gp_1\cdots p_sq_1^{l_1}\cdots q_t^{l_t},\]
where $q_i\in\cap_{j=1}^u\mathcal C_{F_j}$ with $\gcd(q_i,a_n\Delta_{F,mod})=1$ or $\gcd(q_i,a_0\Delta_{F,mod})=1$ and $p_i$ are the other primes.
If $N$ is represented {by} $F(x,y)$ then $n|l_i$ for all $i$.
\end{theorem}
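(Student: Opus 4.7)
The plan is to reduce Theorem \ref{12} to Lemma \ref{123} by treating the irreducible factors $F_j$ one at a time. As in the proof of Lemma \ref{123}, I would first divide through by $g$ to reduce to the case $\gcd(a_n,\ldots,a_0)=1$, in which $\Delta_{F,mod}=\Delta_F$; the conclusion for $F$ then follows immediately from the conclusion for $F/g$.

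The central observation is that any prime $q_i$ dividing $N=F(x,y)=\prod_{j=1}^u F_j(x,y)$ must divide $F_j(x,y)$ for at least one $j$. Under the hypothesis $q_i\in\bigcap_j\mathbf{P}(\mathcal C_{F_j})$, the Frobenius class of $q_i$ in $G_{F_j}$ lies in $\mathcal C_{F_j}$, so by Theorem \ref{frob} the monic irreducible factorization of $F_j(x,y)\bmod q_i$ consists of factors each of degree at least $2$. Hence the only zero of $F_j\bmod q_i$ in $\mathbf F_{q_i}\times\mathbf F_{q_i}$ is $(0,0)$, which forces $q_i\mid x$ and $q_i\mid y$. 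Since $F$ is homogeneous of degree $n$, this yields $q_i^n\mid N$. Setting $x'=x/q_i$, $y'=y/q_i$, $N'=N/q_i^n$ produces the equation $F(x',y')=N'$ with identical coefficients, to which the same argument applies, and iterating on $v_{q_i}(N)$ gives $n\mid l_i$.

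The main technical step is transferring the coprimality hypothesis from $F$ to each $F_j$, so that Lemma \ref{123} genuinely applies to $F_j$. The leading coefficient $a_n$ of $F$ equals the product of the leading coefficients of the $F_j$, and likewise the constant coefficient $a_0$ factors through the constant coefficients of the $F_j$, so coprimality of $q_i$ with $a_n$ (or $a_0$) transfers factor by factor. For the discriminant I would invoke the classical identity
\[\Delta_F=\prod_{j=1}^u\Delta_{F_j}\cdot\prod_{j<k}\mathrm{Res}(F_j,F_k)^2,\]
which implies $\Delta_{F_j}\mid\Delta_F$ for every $j$, so that $\gcd(q_i,\Delta_F)=1$ gives $\gcd(q_i,\Delta_{F_j})=1$. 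Once this bookkeeping is in place, the whole argument reduces cleanly to a factor-by-factor invocation of Lemma \ref{123}, and this coprimality transfer is essentially the only piece requiring care beyond the single-factor case.
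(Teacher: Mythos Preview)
Your proposal is correct and follows essentially the same route as the paper's proof: pick a prime $q_i\mid F(x_0,y_0)$, find an irreducible factor $F_j$ with $q_i\mid F_j(x_0,y_0)$, transfer the coprimality hypotheses on $a_n$ (or $a_0$) and $\Delta_{F,mod}$ down to $F_j$, and then invoke the key step of Lemma~\ref{123} to force $q_i\mid x_0,y_0$, hence $q_i^n\mid N$, and iterate. The only cosmetic difference is that you spell out the discriminant product formula and the iteration explicitly, whereas the paper simply asserts $\Delta_{F_j,mod}\mid\Delta_{F,mod}$ and cites Lemma~\ref{123}.
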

\begin{proof}
By assumption, there exists a pair $(x_0,y_0)\in\mathbf Z^2$ such that $N=F(x_0,y_0)$. Since $q_i|N=F(x_0,y_0)$, there exists a polynomial $F_j(x,y)=a_{j,n_j}x^{n_j}+\cdots+a_{j,0}y^{n_j}$ such that $q_i|F_j(x_0,y_0)$. By the definition of discriminant of polynomial, we have $\Delta_{F_j,mod}|\Delta_{F,mod}$, that is, $\gcd(q_i,a\Delta_{F,mod})=1$ implies $\gcd(q_i,a^{(j)}\Delta_{F_j,mod})=1$, where $a\in\{a_n,a_0\}$ and $a^{(j)}\in\{a_{j,n_j},a_{j,0}\}$. It {follows} that $x_0\equiv y_0\equiv0\mod q$ by Lemma \ref{123}. Therefore we obtain $q^n|N=F(x_0,y_0)$ and $n|l_i$. This is the desired conclusion.
\end{proof}
Next we change the assumption of Lemma \ref{123} and give a necessary and sufficient condition for integers {to be} represented {by} $F(x,y)$. We call a discriminant $\Delta$ fundamental, if one of the following statements holds
\begin{itemize}
\item $\Delta\equiv1\mod 4$ and is square-free,
\item $\Delta=4m$, where $m\equiv2$ or $3\mod 4$ and $m$ is square-free.
\end{itemize}
In the following, we assume that one of $a$ and $c$ is a prime number or $1$ and the discriminant $\Delta_F$ is fundamental. We characterize the prime factorization of integers which are expressed {by} $ax^2+bxy+cy^2$.

\begin{theorem}
\label{1234}
Let $F(x,y)=ax^2+bxy+cy^2$ be a positive definite quadratic form with fundamental modified discriminant $\Delta_{F,mod}$ and $g=\gcd(a,b,c)$. We denote the corresponding order to $F$ by $\mathcal O$ and the set of principal ideals of $\mathcal O$ by $P_{\mathcal O}$. Let $N$ be an integer with \[aN=2^{l}gp_1\cdots p_sq_1\cdots q_tr_1^{l_1}\cdots r_u^{l_u},\]
where $p_i$ ramifies in $\mathbf Q(\sqrt {\Delta_{F}})$, $q_i$ splits completely in $\mathbf Q(\sqrt {\Delta_{F}})$ and $r_i$ are distinct odd inert primes in $\mathbf Q(\sqrt {\Delta_{F}})$.
If $a$ is a prime number or $1$, then $N$ is represented {by} $F(x,y)$ if and only if
\begin{enumerate}
\item[1.] $l$ is even if $\Delta_{F,mod}\equiv5\mod8$.
\item[2.] $l_i$ are even numbers.
\item[3.] There exist prime ideals $\mathfrak{p}, \mathfrak{p}_1,\ldots,\mathfrak p_s, \mathfrak q_1,\ldots,\mathfrak q_t$ lying above $2$, $p_1,\ldots, p_s,$\\ $q_1,\ldots,q_t$ respectively such that
\[\mathfrak{p}^\ell\mathfrak{p}_1\cdots\mathfrak p_s\mathfrak q_1\cdots\mathfrak q_t(r_1)^{\frac{l_1}2}\cdots(r_u)^{\frac{l_u}2}\in P_{\mathcal O},\]
where $\ell$ is $\frac l2$ if $2$ is inert in $\mathbf Q(\sqrt {\Delta_{F}})$, $l$ otherwise.
\end{enumerate}
\end{theorem}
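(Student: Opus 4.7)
The plan is to translate ``$N$ is represented by $F$'' into ``$aN$ is the norm of a principal ideal of $\mathcal O$'' via the classical form-ideal correspondence, and then read off conditions 1--3 from the prime ideal factorization in $K=\mathbf Q(\sqrt{\Delta_F})$.

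First I would reduce to the primitive case by dividing the coefficients by $g=\gcd(a,b,c)$; this replaces $N$ by $N/g$ and leaves $\Delta_{F,mod}$ unchanged, so we may assume $g=1$. Next I would use the identity
\[
aF(x,y)=(ax)^{2}+b(ax)y+ac\,y^{2}=:G(X,Y),
\]
a primitive form of discriminant $b^{2}-4ac=\Delta_F$ with $G(1,0)=1$. Then $F$ represents $N$ iff $G$ represents $aN$ by some $(X,Y)$ with $a\mid X$; since $G(X,Y)\equiv X^{2}\pmod a$ and $a$ is prime or $1$, the relation $G(X,Y)=aN$ forces $a\mid X$ automatically. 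So the problem becomes: when is $aN$ represented by the principal form $G$?

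I would then invoke the correspondence between proper equivalence classes of primitive positive-definite binary quadratic forms of discriminant $\Delta_F$ and the class group of $\mathcal O$, which is the full ring of integers of $K$ because $\Delta_{F,mod}$ is fundamental. Since $G$ lies in the principal class, $G$ represents a positive integer $m$ iff $m=N(\mathfrak b)$ for some $\mathfrak b\in P_{\mathcal O}$. To extract the stated conditions I would analyze the splitting behaviour of each rational prime occurring in $aN$: for a fundamental discriminant, $2$ is inert iff $\Delta_F\equiv 5\pmod 8$, split iff $\Delta_F\equiv 1\pmod 8$, and ramified iff $\Delta_F$ is even. An inert prime $r$ satisfies $N((r))=r^{2}$, forcing its exponent in any norm to be even; this yields conditions 1 and 2. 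Under these conditions, the only remaining choice in an ideal $\mathfrak b$ with $N(\mathfrak b)=aN$ is picking one of the two primes above each split rational prime (and above $2$, if it splits); requiring $\mathfrak b\in P_{\mathcal O}$ for some such choice is exactly condition 3.

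The main obstacle will be the $2$-split case with $l\ge 2$, since a priori $\mathfrak b$ could carry $\mathfrak p^{i}\bar{\mathfrak p}^{\,l-i}$ for any $0\le i\le l$, whereas condition 3 only records the pure power $\mathfrak p^{\ell}$ with $\ell=l$. I would resolve this by extracting the principal factor $(2)^{\min(i,l-i)}=(\mathfrak p\bar{\mathfrak p})^{\min(i,l-i)}$, which does not change the ideal class and reduces $\mathfrak b$ to a pure power of $\mathfrak p$ or $\bar{\mathfrak p}$; swapping the labels $\mathfrak p\leftrightarrow\bar{\mathfrak p}$ (which the statement allows) together with the $\mathfrak q_{i}\leftrightarrow\bar{\mathfrak q}_{i}$ freedom then converts the general principal $\mathfrak b$ into the normal form appearing in condition 3, completing both directions of the equivalence. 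A final bookkeeping step restores the factor $g$ removed in the reduction.
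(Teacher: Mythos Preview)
Your overall route coincides with the paper's: both multiply through by $a$ to pass to the principal-class form $G(X,Y)=X^{2}+bXY+acY^{2}$ and then identify representability of $aN$ by $G$ with $aN$ being the norm of a principal $\mathcal O$-ideal. The paper simply does this by hand---factoring $aN=\bigl(ax+\tfrac{b+\sqrt{\Delta_F}}{2}y\bigr)\bigl(ax+\tfrac{b-\sqrt{\Delta_F}}{2}y\bigr)$ for the forward direction, and for the converse writing a generator $x+\tfrac{\Delta_F+\sqrt{\Delta_F}}{2}y$ of the given principal ideal and reducing modulo $a$---rather than citing the form--ideal dictionary abstractly; condition~1 is obtained there by a direct parity computation and condition~2 by the earlier Lemma on inert primes.

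There is one real slip. You assert $G(X,Y)\equiv X^{2}\pmod a$, but in fact $G(X,Y)\equiv X(X+bY)\pmod a$, so from $a\mid G(X,Y)$ with $a$ prime you only get $a\mid X$ \emph{or} $a\mid(X+bY)$. The repair is immediate: $(X,Y)\mapsto(-X-bY,Y)$ is an (improper) automorphism of $G$ carrying the second alternative to the first, so $a\mid X$ may indeed be arranged. The paper meets the same dichotomy in its converse step, factoring $(x+\tfrac{\Delta_F+b}{2}y)(x+\tfrac{\Delta_F-b}{2}y)\equiv 0\pmod a$ and disposing of it with a ``without loss of generality''. Your handling of the $2$-split obstacle is also not quite what you claim: dividing $\mathfrak b$ by the principal ideal $(2)^{\min(i,l-i)}$ lowers the norm, so the pure-power ideal you obtain has $2$-exponent $|l-2i|$ rather than $\ell=l$, and you have not actually exhibited the ideal demanded in condition~3. (The paper is equally terse here and simply asserts that $\mathfrak b$ has the stated shape.)
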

\begin{proof}
As in the proof of Lemma \ref{123}, we can assume $g=1$ without loss of generality.
We assume that there exists a pair $(x,y)\in\mathbf Z^2$ such that $N=ax^2+bxy+cy^2$. We can obtain the second assertion by Lemma \ref{123}.
Therefore, it suffices to prove the first and the third assertion.

First, we prove the first assertion. If $\Delta_F\equiv5\mod8$, $2|aN$ and $y$ is odd, then
$\Delta_F=b^2-4ac$ is odd, that is, $b$ is. Therefore,
\begin{align*}
aN&=a^2x^{2}+abxy+acy^{2}\\
&\equiv a^2x^{2}+ax+\frac{b^2-\Delta_F}{4}\mod 2.\\
\intertext{Since $\Delta_F\equiv5\mod8$ we obtain}
aN&\equiv a^2x^{2}+ax+1\mod2\\
&\equiv 1\mod2.
\end{align*}
This contradicts $2|aN$ and leads to $2|y$. From the identity $aN=a^2x^{2}+abxy+acy^{2}$ and this result, $ax$ is even. Therefore, $4|aN$ holds. The same argument repeatedly applied to $\frac{aN}4$ leads to $2|l$.

Next we show the third assertion. Since $\Delta_F=b^2-4ac$, we have
\begin{align*}
aN&=a^2x^2+abxy+acy^2\\
&=\left(ax+\frac{b+\sqrt{\Delta_F}}2y\right)\left(ax+\frac{b-\sqrt{\Delta_F}}2y\right)\\
&=\left(ax+\frac{b-\Delta_F}2y+\frac{\Delta_F+\sqrt{\Delta_F}}2y\right)\left(ax+\frac{b-\Delta_F}2y+\frac{\Delta_F-\sqrt{\Delta_F}}2y\right).
\end{align*}
Since $b\equiv \Delta_F\mod2$, $\frac{b-\Delta_F}2$ is an integer. Therefore, $ax+\frac{b-\Delta_F}2y+\frac{\Delta_F+\sqrt{\Delta_F}}2y\in\mathcal O$ and $\left(ax+\frac{b-\Delta_F}2y+\frac{\Delta_F+\sqrt{\Delta_F}}2y\right)\mathcal O\in P_{\mathcal O}$. Now we denote this ideal by $\mathfrak b$ then we have $(aN)\mathcal O=\mathfrak {b\overline{b}}$. Since $p_i$ ramifies in $Q(\sqrt {\Delta_F})$, $p_i\mathcal O=\mathfrak p_i^2$ and $\mathfrak p_i|\mathfrak b$.
Also, $r_i\mathcal O$ is a prime ideal in $Q(\sqrt {\Delta_F})$ and $(r_i\mathcal O)^{\frac{l_i}2}|\mathfrak b$. Also, $q_i$ splits completely as $q_i\mathcal O=\mathfrak q_i\overline{\mathfrak q}_i$ in $Q(\sqrt {\Delta_F})$. Hence, there exist prime ideals $\mathfrak q_i$ such that \[\mathfrak b=\mathfrak{p}^\ell\mathfrak{p}_1\cdots\mathfrak p_s\mathfrak q_1\cdots\mathfrak q_t(r_1)^{\frac{l_1}2}\cdots(r_u)^{\frac{l_u}2}\in P_{\mathcal O}.\]

Conversely, let $\mathfrak b$ be $\mathfrak{p}_1\cdots\mathfrak p_s\mathfrak q_1\cdots\mathfrak q_t(r_1)^{\frac{l_1}2}\cdots(r_u)^{\frac{l_u}2}\in P_{\mathcal O}$. This ideal can be expressed as \[\mathfrak b=\left(x+\frac{\Delta_F+\sqrt{\Delta_F}}2y\right)\mathcal O\]
and $(aN)\mathcal O=\mathfrak {b\overline{b}}$. Since $aN>0$, we obtain $aN=x^2+\Delta_Fxy+\frac{\Delta_F^2-\Delta_F}4y^2$. {Therefore we get}
\begin{align*}
x^2+\Delta_Fxy+\frac{\Delta_F^2-b^2}4y^2&\equiv0\mod a\\
\left(x+\frac {\Delta_F+b}2y\right)\left(x+\frac {\Delta_F-b}2y\right)&\equiv0\mod a.
\end{align*}
Now we assume that $a$ is a prime number or $1$, so $a|(x+\frac {\Delta_F+b}2y)$ or $a|(x+\frac {\Delta_F-b}2y)$.
Without loss of generality, we assume that $a\alpha=x+\frac {\Delta_F-b}2y$ for some integer $\alpha$. If we take $y=\beta$, then $aN=x^2+\Delta_Fxy+\frac{\Delta_F^2-d}4y^2=a^2\alpha^2+ab\alpha\beta+ac\beta^2$.
This proves the theorem.
\end{proof}
\begin{remark}
By swapping $x$ and $y$ in the binary form $ax^2+bxy+cy^2$ , we can replace $a$ by $c$ in Theorem \ref{1234}
\end{remark}
In the following, we consider a Bertrand type estimate for primes corresponding to a conjugacy class $C$ of Galois group $G$ by following the {method} of Hulse and Murty. They gave one of the generalizations of Bertrand's postulate, or Chebyshev's theorem, to number fields \cite{pp}.
We can obtain the following lemma, which gives a Bertrand type estimate for primes corresponding to $C$ of $G$, by following the argument of Hulse and Murty \cite{pp}.
\begin{lemma}[cf. \cite{pp, ta18}]
\label{pp}
Let $L/K$ be a Galois extension with $[L:\mathbf Q]=n$ and $D_L$ the absolute value of the discriminant of $L$.
For any $A > 1$ there exists an effectively computable constant $c(A) > 0$ such that for
$x > \exp(c(A)n(\log D_L)^2)$ there is a prime corresponding to a conjugacy class $C$ of ${\rm Gal}(L/K)$ with
$p\in (x,Ax)$.
\end{lemma}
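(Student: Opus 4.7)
The plan is to adapt the Hulse-Murty approach to Bertrand's postulate over number fields, restricted to those primes whose Frobenius conjugacy class equals the prescribed $C$. Introduce the Chebyshev-type sum
\[
\psi_C(x)=\sum_{\substack{\mathfrak{p}^m:\,N\mathfrak{p}^m\le x\\\mathfrak{p}\text{ unramified in }L/K\\(\mathfrak{p},L/K)\in C}}\log N\mathfrak{p}.
\]
Since the contribution from prime powers with $m\ge 2$ is harmlessly $O(\sqrt{x}\log x)$, producing a prime with the required properties in $(x,Ax)$ reduces to proving $\psi_C(Ax)-\psi_C(x)>0$ throughout the claimed range.

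The key input would be an effective Chebotarev density theorem. I would invoke the Lagarias-Odlyzko bound (as used by Hulse and Murty in the case of completely split primes), which gives
\[
\psi_C(x)=\frac{|C|}{|G|}\,x\;-\;\frac{|C|}{|G|}\frac{x^{\beta_0}}{\beta_0}\;+\;O\!\left(x\exp(-c_1 n^{-1/2}(\log x)^{1/2})\right),
\]
valid once $\log x\ge c_0\,n(\log D_L)^2$, where $\beta_0$ denotes a possible exceptional Siegel zero of a Dedekind zeta function attached to a subfield of $L$, the Siegel term is dropped when no such zero exists, and the implied constants are effective. Subtracting this formula at $x$ from that at $Ax$ isolates a main term of size $\frac{|C|}{|G|}(A-1)x$ together with error contributions from the Siegel zero and from the smooth error term.

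The main obstacle is controlling the Siegel-zero contribution, as this is the only piece that blocks a direct effective bound. Using Stark's explicit lower bound for $1-\beta_0$ in terms of $D_L$, one checks that $(Ax)^{\beta_0}-x^{\beta_0}$ is dominated by $(A-1)x$ whenever $\log x$ is sufficiently large compared to $n(\log D_L)^2$, with an explicit rate. The smooth error is treated in the same spirit: choosing $c(A)$ large enough compared to $c_0$, $c_1$, and $\log\frac{1}{A-1}$, the factor $\exp(-c_1 n^{-1/2}(\log x)^{1/2})$ drops below $\tfrac{1}{2}(A-1)|C|/|G|$ throughout $x>\exp(c(A)n(\log D_L)^2)$. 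Putting these estimates together gives an effectively computable $c(A)>0$ with $\psi_C(Ax)-\psi_C(x)>0$ on the stated range; positivity, combined with the negligible prime-power contribution, forces the existence of an unramified prime $\mathfrak{p}$ of $K$ with $N\mathfrak{p}\in(x,Ax)$ and $(\mathfrak{p},L/K)\in C$, completing the proof.
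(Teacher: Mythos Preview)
Your proposal is correct and is essentially what the paper intends: the paper gives no proof of this lemma at all, merely stating that one ``can obtain the following lemma \ldots\ by following the argument of Hulse and Murty \cite{pp},'' and your sketch is precisely that argument---the Lagarias--Odlyzko effective Chebotarev estimate combined with Stark's lower bound for a possible exceptional zero, differenced at $x$ and $Ax$ to force a prime of the right Frobenius class in the gap. One small point of phrasing: the lemma as stated writes ``$p\in(x,Ax)$,'' whereas your conclusion produces an unramified prime $\mathfrak{p}$ of $K$ with $N\mathfrak{p}\in(x,Ax)$; these agree when $K=\mathbf{Q}$ (which is in fact how the paper applies the lemma in Theorem~\ref{nan}, taking $L$ over $\mathbf{Q}$), and your formulation is the correct general one.
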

Next, we consider the distribution of prime ideals $\mathfrak p$ corresponding to a conjugacy class $C$ such that their ideal norm is of the form $p^f$.
\begin{theorem}
\label{nan}
Let $L$ be the Galois closure of $K/\mathbf Q$ with $k=[L:\mathbf Q]$ and $p$ a prime corresponding to a conjugacy class $C$ of ${\rm Gal}(L/\mathbf Q)$.
For any $A > 1$ there exists an effectively computable constant $c(A) > 0$ such that for
$p^{f_i} > \exp(c(A)k(\log D_L)^2)$ there exists a prime ideal $\mathfrak q$ with
$\mathfrak{Nq}=q^{f_i}\in(p^{f_i}, Ap^{f_i})$, where $q\in\mathbf P(C)$.
\end{theorem}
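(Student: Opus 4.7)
The plan is to reduce Theorem \ref{nan} to Lemma \ref{pp} by rescaling the parameter $A$, via the identity
$q^{f_i}\in(p^{f_i},Ap^{f_i})\iff q\in(p,A^{1/f_i}p)$, and then invoke Theorem \ref{frob} to produce the prime ideal $\mathfrak q$ with the required norm.

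First, I would dispose of the ramified primes: since ramification occurs at only finitely many rational primes (those dividing the discriminant of a defining polynomial of $K$), any prime $q$ large enough to lie in the interval of interest will be unramified in $L$. Assuming $q\in\mathbf P(C)$ is such a prime with cycle type $[f_1,\ldots,f_r]$, Theorem \ref{frob} applied to the minimal polynomial of a primitive element of $K/\mathbf Q$ shows that $q\mathcal O_K$ factors as $\mathfrak q_1\cdots\mathfrak q_r$ with $\mathrm N\mathfrak q_j=q^{f_j}$. Thus, for the designated index $i$, there exists a prime $\mathfrak q$ of $\mathcal O_K$ with $\mathrm N\mathfrak q=q^{f_i}$, and the problem is reduced to producing a rational prime $q\in\mathbf P(C)$ in the interval $(p,A^{1/f_i}p)$.

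Second, I would apply Lemma \ref{pp} to the Galois extension $L/\mathbf Q$ (so $n=k$) with the parameter $A^{1/f_i}>1$ in place of $A$. This yields an effectively computable constant $c_0:=c(A^{1/f_i})>0$ such that whenever
\[
p>\exp\bigl(c_0\,k(\log D_L)^2\bigr),
\]
the interval $(p,A^{1/f_i}p)$ contains a prime $q$ with $q\in\mathbf P(C)$. Raising the inequality to the $f_i$-th power gives
\[
p^{f_i}>\exp\bigl(c_0 f_i\,k(\log D_L)^2\bigr),
\]
so setting $c(A):=f_i\cdot c(A^{1/f_i})$ (which depends only on $A$ once the conjugacy class $C$, and hence its cycle type, has been fixed) produces the hypothesis of the theorem. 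Combining with the previous step, the resulting $q$ satisfies $q^{f_i}\in(p^{f_i},Ap^{f_i})$, and the prime $\mathfrak q$ above $q$ with residue degree $f_i$ is the required prime ideal.

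The argument is essentially a change of variable, so the only real bookkeeping issue—and the closest thing to an obstacle—is tracking how $f_i$ is absorbed into $c(A)$: since $f_i$ is a fixed integer attached to the conjugacy class $C$, the scaling $c(A)=f_i\cdot c(A^{1/f_i})$ is legitimate and effectively computable, inheriting the effectivity of Lemma \ref{pp}. The appeal to Theorem \ref{frob} is clean once unramifiedness is guaranteed by the lower bound on $p$.
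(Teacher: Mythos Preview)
Your proof is correct and follows essentially the same strategy as the paper: both reduce Theorem~\ref{nan} to the Bertrand-type estimate of Lemma~\ref{pp} for primes in a fixed conjugacy class, and then pass from the rational prime $q\in\mathbf P(C)$ to a prime ideal of $K$ with residue degree $f_i$.

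The packaging differs slightly. The paper lifts $\mathfrak p$ to a prime $\mathfrak P$ of $L$, records that $\mathfrak N\mathfrak P=p^{fl}$ with $l=|D_{\mathfrak P/\mathfrak p}|$, applies Lemma~\ref{pp} at the level of $L$ to produce $\mathfrak Q$ with $\mathfrak N\mathfrak Q=q^{fl}\in(p^{fl},Ap^{fl})$, and then intersects back to $\mathfrak q=\mathfrak Q\cap\mathcal O_K$; the inclusion $q^f\in(p^f,Ap^f)$ then follows because taking $l$-th roots only shrinks the interval. You instead stay over $\mathbf Q$, rescale the parameter to $A^{1/f_i}$, and invoke Theorem~\ref{frob} to obtain $\mathfrak q$. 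Your version is arguably more transparent about where the constant $c(A)=f_i\cdot c(A^{1/f_i})$ comes from, while the paper's route through $L$ hides the rescaling inside the ideal-norm bookkeeping; but the underlying argument is the same.
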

\begin{proof}
Let $\mathfrak p$ be a prime ideal with $\mathfrak{Np}=p^{f}$ and $\mathfrak P$ a prime ideal lying above $\mathfrak p$. We denote the order of the decomposition group $D_{\mathfrak P/\mathfrak p}$ by $l$, then $\mathfrak N\mathfrak P=p^{fl}$. From Lemma \ref{pp}, any $A > 1$ there exists a constant $c(A) > 0$ such that for
$p^{fl}=\mathfrak N\mathfrak P > \exp(c(A)k(\log D_L)^2)$ there is a prime $\mathfrak Q$ corresponding to $C_{\mathfrak P}$ with
$q^{fl}=\mathfrak N\mathfrak Q\in (p^{fl},Ap^{fl})$.

Since $p,q\in\mathbf P(C_{\mathfrak P})$, there exists a prime ideal $\mathfrak Q$ with $\mathfrak N\left(\mathfrak Q \cap \mathcal O_{K}\right)=q^f$. We denote $\mathfrak q=\mathfrak Q \cap \mathcal O_{K}$.
Thus for $p^{f} > \exp(c(A)k(\log D_L)^2)$ there is a prime ideal $\mathfrak q$ with
$\mathfrak{Nq}=q^{f}\in(p^{f}, Ap^{f})$.
\end{proof}


\section{Main theorems}
In this section, we give a necessary condition for the existence of infinitely many solutions for the equation $F(x,y)=\Pi_K(l)$.

First, we consider the equation $F(x,y)=l!$, where $F$ is an irreducible homogeneous polynomial.
\begin{theorem}
\label{main1}
Let $F(x,y)=a_nx^{n}+a_{n-1}x^{n-1}y+\cdots+a_0y^{n}$ be a homogeneous irreducible polynomial with $\deg F\ge2$, then there exist only finitely many $l$ such that $l!$ is represented {by} $F(x,y)$.
\end{theorem}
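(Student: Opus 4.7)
The plan is to combine Lemma \ref{class}, the Bertrand-type estimate of Lemma \ref{pp}, and the divisibility statement of Lemma \ref{123}. Since $F$ is irreducible of degree $n\ge 2$, the Galois group $G_F$ of its splitting field $K_F$ is a transitive subgroup of $S_n$ with $|G_F|\ge n\ge 2$. By Lemma \ref{class}, there exists $\sigma\in G_F$ fixing no index, and its cycle type $[f_1,\ldots,f_r]$ satisfies $f_i\ge 2$ for all $i$. Consequently the set $\mathcal C_F$ of conjugacy classes whose cycle types have all parts $\ge 2$ is non-empty, so I can fix any $C\in\mathcal C_F$ and use it as the target class throughout the argument.

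Next I would invoke Lemma \ref{pp} applied to $L=K_F$ with $A=2$: there is an effectively computable constant $c>0$, depending only on $F$, such that for every $x>c$ the interval $(x,2x)$ contains a prime $q\in\mathbf P(C)$. Setting $x=l/2$ yields, for all sufficiently large $l$, a prime $q\in\mathbf P(C)$ with $l/2<q\le l$. Since only finitely many primes divide $a_n\Delta_{F,\mathrm{mod}}$, enlarging the threshold on $l$ if necessary also guarantees $\gcd(q,a_n\Delta_{F,\mathrm{mod}})=1$, so that $q$ satisfies the hypotheses of Lemma \ref{123}. By Legendre's formula, the $q$-adic valuation of $l!$ is $v_q(l!)=\lfloor l/q\rfloor=1$.

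Now suppose for contradiction that $l!=F(x_0,y_0)$ for some $(x_0,y_0)\in\mathbf Z^2$ with $l$ large. Expressing $l!$ in the form required by Lemma \ref{123} with $q$ playing the role of one of the $q_i$, the lemma forces $n\mid v_q(l!)=1$, which contradicts $n\ge 2$. Therefore only finitely many $l$ admit a representation $l!=F(x,y)$, as required.

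The only substantive ingredient is the Bertrand-type estimate of Lemma \ref{pp}, which has already been established in the paper; the remainder is routine bookkeeping to confirm that the size condition and the coprimality condition of Lemma \ref{123} are automatically satisfied once $l$ is sufficiently large. I do not anticipate any genuine obstacle beyond correctly matching the hypotheses of the three lemmas.
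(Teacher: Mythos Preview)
Your proposal is correct and follows essentially the same approach as the paper: use Lemma \ref{class} to obtain a conjugacy class $C\in\mathcal C_F$, invoke the Bertrand-type estimate to place a prime of $\mathbf P(C)$ in $(l/2,l]$ for all large $l$, and then apply Lemma \ref{123} to force $n\mid v_q(l!)=1$. The only cosmetic difference is that the paper phrases the Bertrand step via Theorem \ref{nan} and then chains intervals $(p,2p),(q,2q),\ldots$ inductively, whereas you apply Lemma \ref{pp} directly with $x=l/2$; the underlying argument is the same.
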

\begin{proof}
Lemma \ref{class} provides that $\mathcal C_F\not=\emptyset$.
Let $C\in\mathcal C_F$ be a fixed conjugacy class of $G_F$.
The assumption $\deg F\ge2$ and Lemma \ref{123} imply that if $N$ is represented {by} $F(x,y)$ and $p|N$ for prime $p$ corresponding to $C$ with $\gcd(q_i,a_n\Delta_{F,mod})=1$ or $\gcd(q_i,a_0\Delta_{F,mod})=1$, then $N$ is divisible by $p^2$ at least. In particular, $F(x,y)=p!$ has no integer solution $(x,y)$.
Moreover, since the second smallest positive integer divisible by $p$ is $2p$, $l!$ is not of the form in Lemma \ref{123} for $p\le l<2p$, that is, there exists no pair $(x,y)\in\mathbf Z^2$ such that $F(x,y)=l!$ for $p\le l<2p$.

Let $\alpha$ be a root of $F(x,1)$ and let $k$ be the extension degree of $K_F/\mathbf Q$. We denote the ring of integers of $\mathbf Q(\alpha)$ by $\mathcal{O}_{\alpha}$.
Theorem \ref{nan} states that there exists $c > 0$ such that for
$x > \exp(ck(\log D_{K_F})^2)$ there is a prime ideal $\mathfrak p$ of $\mathcal{O}_{\alpha}$ corresponding to $C$ with $\mathfrak{Np}=p^f\in (x,2x)$.
Let $\mathfrak p$ be a prime ideal of $\mathcal{O}_{\alpha}$ corresponding to $C$ with $\mathfrak{Np}=p^f>\max\{\exp(ck(\log D_{K_F})^2), (a_n\Delta_{F,mod})^f,$ $(a_0\Delta_{F,mod})^f\}$.
Since we have $p^f>\exp(ck(\log D_{K_F})^2)$, there exists $\mathfrak q$ corresponding to $C$ with $\mathfrak{Nq}=q^f\in (p^f,2p^f)$, that is, there exists a prime $q$ corresponding to $C$ with $q\in (p,2p)$.

As above, $l!$ is not of the form in Lemma \ref{123} for $q\le l<2q$ and there exists a prime $q_1$ corresponding to $C$ with $q_1\in (q,2q)$. By induction, $l!$ is not of the form in Lemma \ref{123} for $p\le l$. This shows the finiteness of $l$ such that $l!$ is represented {by} $F(x,y)$.

\end{proof}
As a corollary of this theorem, we obtain the result of Berend and Harmse for irreducible polynomial.
\begin{theorem}[Theorem 3.1. of \cite{bh}]
\label{q1}
For any irreducible polynomial $P(x)\in\mathbf Z[x]$ with $\deg P\ge2$, the equation $P(x)=H_l$ has only finitely many solutions $(x,l)$.
\end{theorem}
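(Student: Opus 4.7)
The plan is to deduce Theorem~\ref{q1} from Theorem~\ref{main1} by homogenizing $P$ and then replaying the proof of Theorem~\ref{main1} with minor modifications for the two sequences $H_l$ other than $l!$. Writing $P(x) = a_n x^n + a_{n-1} x^{n-1} + \cdots + a_0$ with $n = \deg P \geq 2$, I would form the homogenization $F(x, y) = a_n x^n + a_{n-1} x^{n-1} y + \cdots + a_0 y^n$. Since $P$ is irreducible in $\mathbf{Z}[x]$ with $a_n \neq 0$, the binary form $F$ is irreducible in $\mathbf{Z}[x, y]$: a nontrivial homogeneous factorization $F = GH$ either specializes at $y = 1$ to a nontrivial factorization of $P$, or else contains $y$ as a factor, which would force $a_n = 0$. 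Any solution $(x, l)$ of $P(x) = H_l$ then gives $F(x, 1) = H_l$, so $H_l$ is represented by $F$, and for $H_l = l!$ the finiteness is immediate from Theorem~\ref{main1}.

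For $H_l = [1, 2, \ldots, l]$ and $H_l = p_1 p_2 \cdots p_l$ I would run the argument of Theorem~\ref{main1} essentially verbatim. Fix a conjugacy class $C \in \mathcal C_F$, nonempty by Lemma~\ref{class}. Lemma~\ref{123} then says that any prime $p$ corresponding to $C$ and coprime to $a_n a_0 \Delta_{F,mod}$ which divides a value of $F(x, y)$ must do so with multiplicity divisible by $n \geq 2$. Consequently, if $H_l$ is represented by $F$, no such prime can occur in $H_l$ with multiplicity exactly $1$. Theorem~\ref{nan} then supplies an infinite chain of primes $q_0 < q_1 < q_2 < \cdots$ corresponding to $C$ with $q_{i+1} \in (q_i, 2 q_i)$, reducing the task to placing some $q_i$, avoiding the finitely many exceptional primes dividing $a_n a_0 \Delta_{F,mod}$, in the exponent-$1$ range of $H_l$.

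The three cases differ only in identifying this range: for $l!$ it is the set of primes in $(l/2, l]$; for $[1, 2, \ldots, l]$ it is the set of primes in $(\sqrt{l}, l]$; and for $p_1 p_2 \cdots p_l$ it is $\{p_1, \ldots, p_l\}$. In the first two cases the factor-of-$2$ spacing of the $q_i$ guarantees that at least one of them lands in the required interval once $l$ is sufficiently large. In the third case, the positive density of primes corresponding to $C$ that is implicit in Lemma~\ref{pp} (equivalently, the Chebotarev density theorem) ensures that some $q_i$ lies among $p_1, \ldots, p_l$ for large $l$. In every case, the resulting divisibility obstruction contradicts the assumption that $H_l$ is represented by $F$, and hence $P(x) = H_l$ can hold for only finitely many $l$; since for each such $l$ the polynomial equation $P(x) = H_l$ admits at most $n$ values of $x$, there are only finitely many pairs $(x, l)$.

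The main obstacle I anticipate is bookkeeping rather than substance: one must carefully align the primes produced by Theorem~\ref{nan} with the exponent-$1$ range of each $H_l$ and exclude the finitely many primes dividing $a_n a_0 \Delta_{F,mod}$, especially in the primorial case, where the multiplicative windows of Theorem~\ref{nan} do not immediately match the set $\{p_1, \ldots, p_l\}$ and one must instead invoke the density statement underlying Lemma~\ref{pp}.
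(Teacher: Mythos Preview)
Your proposal is correct and follows essentially the same approach as the paper: the paper presents Theorem~\ref{q1} as an immediate corollary of Theorem~\ref{main1} (via the specialization $y=1$ of the homogenization $F$), noting elsewhere that the argument carries over to the other highly divisible sequences $H_l$ because ``the $p$-factor of the above highly divisible sequences $H_l$ appears with regularity.'' Your write-up simply makes this explicit, and the only overkill is in the primorial case, where you do not need Theorem~\ref{nan} or density at all---a single prime $q$ corresponding to $C$ and coprime to $a_na_0\Delta_{F,mod}$ already forces $H_l$ to be unrepresentable for every $l$ with $p_l\ge q$.
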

Next we consider the general case $F(x,y)=\Pi_K(l)$.
For a prime $p$ and its Frobenius map $(p,K_P/\mathbf Q)$ with cycle type $[f_1,\ldots,f_r]$, we define $G_p(l;K)$ as the number of $f_i$ such that $f_i=l$.  If $K/\mathbf Q$ is a Galois extension with extension degree $k$, then $fG_p(f;K)=k$ for all primes $p$ unramified in $K$, where $f$ is the inertia degree of $p$ in $K$. Therefore, we obtain the following theorem.
\begin{theorem}
\label{fin}
Let $K$ be a Galois extension of $\mathbf Q$ and $F(x,y)$ a polynomial in $\mathbf Z[x,y]$ whose irreducible factorization is \[F(x,y)=\prod_{j=1}^uF_j(x,y).\]
Assume that there exist a conjugacy class $C$ of ${\rm Gal}(K/\mathbf Q)$, positive integers $a$ and $b>1$ such that $\mathbf P(C)\cap \bigcap_{i} \mathbf P(\mathcal C_{F_i})\supset\{p:\text{prime}~|~p\equiv a\mod b\}$.
If $\deg F$ does not divide $[K:\mathbf Q]$ then there exist only finitely many $l$ such that $\Pi_K(l)$ is represented {by} $F(x,y)$.
\end{theorem}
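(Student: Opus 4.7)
The plan is to adapt the scheme of the proof of Theorem~\ref{main1}, replacing $l!$ by $\Pi_K(l)$ and using the Galois-theoretic hypothesis to control the splitting of certain primes in $K$. Write $n = \deg F$, $k = [K:\mathbf{Q}]$, and let $f$ denote the common order of an element of the conjugacy class $C$. Because $K/\mathbf{Q}$ is Galois, for every prime $q$ unramified in $K$ with $q \in \mathbf{P}(C)$, each prime of $\mathcal{O}_K$ above $q$ has residue degree $f$ and there are exactly $g = k/f$ of them, each of norm $q^f$.

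The central local computation is to evaluate $v_q(\Pi_K(l))$ for a prime $q$ chosen so that $q^f \le l < 2 q^f$. An ideal $\mathfrak{a}$ of $\mathcal{O}_K$ contributes positively to $v_q(\Pi_K(l))$ only when $q \mid \mathfrak{Na}$, and this forces $\mathfrak{a}$ to be divisible by some prime $\mathfrak{q}_i$ above $q$. Writing $\mathfrak{a} = \mathfrak{q}_i \mathfrak{b}$, the norm becomes $q^f \mathfrak{Nb}$, and the constraint $\mathfrak{Na} \le l < 2q^f$ forces $\mathfrak{Nb} = 1$, i.e.\ $\mathfrak{b} = \mathcal{O}_K$. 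The contributing ideals are therefore exactly the $g$ primes above $q$, each giving $v_q(\mathfrak{Nq}_i) = f$, so that $v_q(\Pi_K(l)) = g f = k$.

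Now assume for contradiction that $\Pi_K(l) = F(x_0, y_0)$ for some integers $x_0, y_0$. Since $q \equiv a \bmod b$, the hypothesis places $q$ in $\bigcap_i \mathbf{P}(\mathcal{C}_{F_i})$; provided also that $q \nmid a_n a_0 \Delta_{F,mod}$ and that $q$ is unramified in $K$ (both conditions excluding only finitely many primes), Theorem~\ref{12} yields $n \mid v_q(\Pi_K(l)) = k$, contradicting the hypothesis $n \nmid k$. It suffices therefore to exhibit, for every sufficiently large $l$, such a prime $q$: explicitly, a prime $q \equiv a \bmod b$ lying in the interval $\bigl((l/2)^{1/f},\, l^{1/f}\bigr]$. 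Since the endpoints differ by the fixed factor $2^{1/f} > 1$, this follows immediately from the prime number theorem in arithmetic progressions (or any Bertrand-type refinement), which guarantees a prime in such a short interval once $l$ is large enough.

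The main obstacle I anticipate is the valuation calculation: one must verify that no ideal with norm strictly between $q^f$ and $2q^f$ divisible by a prime above $q$ can contribute, which rests on the elementary observation that any such ideal would have norm $q^f m$ for some integer $m \ge 2$, hence exceed $2q^f$. A secondary technical point is ensuring that the hypotheses of Theorem~\ref{12} pass through cleanly when $F$ is reducible, but this is precisely the situation for which Theorem~\ref{12} was designed.
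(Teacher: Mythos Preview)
Your argument is correct and follows essentially the same route as the paper's proof. Both hinge on the observation that for a prime $q\equiv a\bmod b$ (hence $q\in\mathbf P(C)$) with $q^f\le l<2q^f$, the only ideals of $\mathcal O_K$ with norm $\le l$ divisible by $q$ are the $k/f$ primes above $q$, so $v_q(\Pi_K(l))=k$; Theorem~\ref{12} then forces $n\mid k$, contrary to hypothesis. The paper phrases the existence of such $q$ as an inductive chain (find $q\in(p,2^{1/f}p)$, then repeat), whereas you state it directly as ``for each large $l$ pick $q\in((l/2)^{1/f},l^{1/f}]$'' --- these are equivalent uses of a Bertrand-type bound for the progression $a\bmod b$, and your formulation is arguably cleaner.
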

\begin{proof}
We denote the extension degree $[K:\mathbf Q]$ by $k$. As we remarked above, $fG_p(f;K)=k$ for all primes $p$ unramified in $K$, where $f$ is the inertia degree of $p$ in $K$. Then for all primes $p$ unramified in $K$, $p^{fa(p^f)}=p^k$. Since $\deg F$ does not divide $[K:\mathbf Q]$, Lemma \ref{12} implies that there is no pair $(x,y)$ such that $F(x,y)=\Pi_K(p^{f})$ for $p\in\{p:\text{prime}~|~p\equiv a\mod b\}$.

By {a} Bertrand type estimate for primes in an arithmetic progression $bm+a$, there exists $c> 0$ such that for $x > c$ there is a prime $p\in\{p:\text{prime}~|~p\equiv a\mod b\}$ with $p\in (x,2^{\frac1{f}}x)$.
Let $\mathfrak p$ be a prime ideal of $K$ corresponding to $C$ with $\mathfrak{Np}=p^f>\max\{c^f, (a_n\Delta_{F,mod})^f, (a_0\Delta_{F,mod})^f\}$.
Since $p>c$, there exist a prime $q$ satisfying $q=bm_q+a\in(p,2^{\frac1f}p)$. By the assumption, we have $q\in\mathbf P(C)$ and there exist $\mathfrak q$ lying above $q$ with $\mathfrak{Nq}=q^f\in (p^f,2p^f)$.

As above, $\Pi_K(l)$ is not of the form in Lemma \ref{12} for $q^f\le l<2q^f$ and there exists a prime ideal $\mathfrak q_1$ corresponding to $C$ with $\mathfrak{Nq}_1\in (q^f,2q^f)$. By induction, $\Pi_K(l)$ is not of the form in Lemma \ref{12} for $p^f\le l$. This shows the finiteness of $l$ such that $\Pi_K(l)$ is represented {by} $F(x,y)$.
\end{proof}
Since the $p$-factor of the above highly divisible sequences $H_l$ appears with regularity, we can replace $\Pi_K(l)$ with $H_l$ in Theorem \ref{fin}.
When $K=\mathbf Q$, the conjugacy class $C$ in Theorem \ref{fin} is equal to $\{1\}$ and $\deg F$ does not divide $[K:\mathbf Q]$. Therefore, we obtain the following corollary.
\begin{corollary}
\label{corbh}
Let $F(x,y)$ a polynomial in $\mathbf Z[x,y]$ whose irreducible factorization is \[F(x,y)=\prod_{j=1}^uF_j(x,y).\]
Assume that there exist positive integers $a$ and $b>1$ such that \[\bigcap_{i} \mathbf P(\mathcal C_{F_i})\supset\{p:\text{prime}~|~p\equiv a\mod b\}.\]
Then there exist only finitely many $l$ such that $H_l$ is represented {by} $F(x,y)$.
\end{corollary}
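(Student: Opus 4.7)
The plan is to derive Corollary \ref{corbh} as an immediate specialization of Theorem \ref{fin} to the base case $K=\mathbf Q$, combined with the author's preceding remark that $\Pi_K(l)$ may be replaced throughout by any of the highly divisible sequences $H_l$.

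First I would verify that the hypotheses of Theorem \ref{fin} hold in this specialization. Since ${\rm Gal}(\mathbf Q/\mathbf Q)=\{1\}$, its unique conjugacy class is $C=\{1\}$ and $\mathbf P(C)$ equals the full set of rational primes, so the intersection $\mathbf P(C)\cap\bigcap_i\mathbf P(\mathcal C_{F_i})$ collapses to $\bigcap_i\mathbf P(\mathcal C_{F_i})$; the corollary's assumption therefore yields exactly the assumption required by Theorem \ref{fin}. The divisibility condition $\deg F\nmid[K:\mathbf Q]=1$ is automatic once one observes that no irreducible factor $F_j$ of $F$ can be linear: if $\deg F_j=1$ then $\mathcal C_{F_j}$ is empty (the symmetric group on one letter contains no permutation with all cycle lengths $\ge 2$), so the hypothesized inclusion of a whole arithmetic progression into $\bigcap_i\mathbf P(\mathcal C_{F_i})$ would be impossible.

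Second I would apply Theorem \ref{fin} directly, which gives finitely many $l$ with $\Pi_{\mathbf Q}(l)=l!$ represented by $F(x,y)$, settling the case $H_l=l!$. For the remaining highly divisible sequences $[1,2,\ldots,l]$, $p_1\cdots p_l$, and $\binom{al}{l,\ldots,l}$, I would reproduce the induction from the proof of Theorem \ref{fin}: a Bertrand-type estimate for primes in the progression $a\bmod b$ produces an infinite chain $q_1<q_2<\cdots$ with $q_{k+1}<2q_k$, and for each $k$ one identifies a window of $l$-values on which $v_{q_k}(H_l)=1$, namely $[q_k,q_k^2)$ for the lcm, $[q_k,q_{k+1})$ for the primorial, and $[q_k,2q_k)$ for the factorial. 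On each such window Theorem \ref{12} rules out representability of $H_l$ by $F$.

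The reduction step is pure bookkeeping; the only place extra work is needed is the multinomial coefficient $H_l=\binom{al}{l,\ldots,l}$, whose $p$-adic valuation is controlled by Kummer's theorem via carries in base $p$, and for which one must explicitly exhibit an interval of $l$ on which $v_p(H_l)$ is not divisible by $\deg F$. This is the main technical obstacle; the other three cases are essentially immediate from the stated regularity of the prime factorization of $H_l$.
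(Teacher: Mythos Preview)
Your proposal is correct and follows essentially the same route as the paper, which derives the corollary by specializing Theorem~\ref{fin} to $K=\mathbf Q$ (so that $C=\{1\}$, $\mathbf P(C)$ is the set of all primes, and the intersection condition collapses to the corollary's hypothesis) together with the preceding one-line remark that the proof of Theorem~\ref{fin} carries over to each highly divisible sequence $H_l$. Your explicit observation that the hypothesis forces every $F_j$ to have degree $\ge 2$, hence $\deg F\nmid 1$, fills in a step the paper simply asserts, and your case-by-case sketch of the windows on which $v_p(H_l)=1$ makes precise what the paper compresses into ``the $p$-factor \ldots\ appears with regularity.''
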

Taking $y=1$ in Corollary \ref{corbh},  we get the result of Berend and Harmse for reducible polynomials partially.
To explain their result, we introduce the natural density $d(S)$ for a subset $S$ of the set of all primes defined by
\[d(S) = \lim_{x\rightarrow\infty}\frac{\pi(x, S)}{\pi(x)},\]
where $\pi(x)$ is the number of primes $p\le x$ and $\pi(x, S)$ is the number of those belonging to $S$.
\begin{theorem}[Theorem 4.1. of \cite{bh}]
\label{q2}
Consider the equation
\begin{equation}
\label{equ2}
P(x)=H_l.
\end{equation}
Let $Q(x) \in\mathbf Z[x]$ be any factor (irreducible or not) of $P$. Denote by $S(Q) \subset P$ the set of all primes $p$ for which the congruence $Q(x)\equiv 0 \mod p$ has a solution. If $d(S(Q)) < \frac{\deg Q}{\deg P}$, then (\ref{equ2}) has only finitely many solutions.
\end{theorem}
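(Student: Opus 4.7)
The plan is to exploit the fact that, up to a bounded constant, writing $P=QR$ in $\mathbf{Q}[x]$ and clearing denominators, any integer solution of $P(x)=H_l$ decomposes $H_l$ as a product $Q(x_0)R(x_0)$; then force a size contradiction by showing that the primes outside $S(Q)$ contribute too much to $R(x_0)$. Concretely, since $Q\mid P$ in $\mathbf{Q}[x]$, pick $R\in\mathbf{Z}[x]$ and $k\in\mathbf{Z}_{>0}$ with $kP=QR$, so that $kH_l=Q(x_0)R(x_0)$ whenever $P(x_0)=H_l$.

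The crux is the following observation about \emph{bad primes}. If $p\notin S(Q)$, then $Q(x)\equiv 0\bmod p$ has no solution in $\mathbf{Z}/p\mathbf{Z}$, and in particular $p\nmid Q(x_0)$. Hence for every such $p$ not dividing $k$ we get $v_p(R(x_0))\geq v_p(H_l)$. Summing over all bad primes up to $l$ (or up to the largest prime factor of $H_l$, which for each of the three sequences $l!$, $[1,\dots,l]$, $p_1\cdots p_l$ behaves like $l$) yields
\[
\log|R(x_0)|\;\geq\;\sum_{\substack{p\leq l\\ p\notin S(Q),\,p\nmid k}}v_p(H_l)\log p.
\]

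The heart of the argument is an asymptotic for this sum. Using Abel summation together with the prime number theorem for the set $S(Q)^{c}$, which has density $1-d(S(Q))$, one shows that for each of the admissible highly divisible sequences,
\[
\sum_{\substack{p\leq l\\ p\notin S(Q)}}v_p(H_l)\log p\;\sim\;\bigl(1-d(S(Q))\bigr)\log H_l,
\]
since the main contribution comes from the $i=1$ term $\lfloor l/p\rfloor\log p$ and $\sum_{p\in T,\,p\leq l}\log p/p\sim d(T)\log l$ for any set $T$ of primes of density $d(T)$. On the other hand, $P(x_0)=H_l$ forces $|x_0|\asymp H_l^{1/\deg P}$, hence
\[
\log|R(x_0)|\;=\;\tfrac{\deg R}{\deg P}\log H_l+O(1)\;=\;\Bigl(1-\tfrac{\deg Q}{\deg P}\Bigr)\log H_l+O(1).
\]

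Combining the two estimates gives $1-d(S(Q))\leq 1-\deg Q/\deg P+o(1)$, that is $d(S(Q))\geq \deg Q/\deg P-o(1)$, contradicting the hypothesis $d(S(Q))<\deg Q/\deg P$ once $l$ is large enough; hence there are only finitely many solutions. I expect the main obstacle to be the density asymptotic above: for $H_l=l!$ it follows cleanly from Legendre's formula plus Abel summation, but for $H_l=[1,2,\ldots,l]$ and $H_l=p_1\cdots p_l$ the exponents $v_p(H_l)$ behave differently (respectively $\lfloor\log l/\log p\rfloor$ and just $1$), so a separate, slightly delicate asymptotic analysis is needed in each case, and one must also make sure that the finitely many primes dividing $k$ and the leading coefficients do not spoil the inequality.
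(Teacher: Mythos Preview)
The paper does not prove this statement at all: Theorem~\ref{q2} is quoted verbatim from Berend--Harmse \cite{bh} and is invoked only to remark that it subsumes Corollary~\ref{corbh} in the special case $y=1$. There is therefore no ``paper's own proof'' to compare your attempt against.

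That said, your outline is essentially the original Berend--Harmse argument and is sound. You factor $kP=QR$, note that every prime $p\notin S(Q)$ (and not dividing $k$) must divide $R(x_0)$ to the full power $v_p(H_l)$, and then compare the two estimates for $\log|R(x_0)|$: the polynomial growth gives $(1-\deg Q/\deg P)\log H_l+O(1)$, while the bad-prime contribution gives $(1-d(S(Q))+o(1))\log H_l$. One small reassurance on the point you flagged as the main obstacle: the asymptotic $\sum_{p\in T,\,p\le x}(\log p)/p\sim d(T)\log x$ really does follow from natural density alone, via partial summation from $\pi_T(x)=(d(T)+o(1))x/\log x$; you do not need any Chebotarev-type error term for $S(Q)$ beyond the existence of the density assumed in the hypothesis. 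After that, the three sequences $H_l=l!$, $[1,\dots,l]$, $p_1\cdots p_l$ (and the multinomial case) require only the routine case-by-case checks you already anticipated, and the finitely many primes dividing $k$ or the leading coefficients are absorbed into the $O(1)$.
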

The assumption $\bigcap_{i} \mathbf P(\mathcal C_{F_i})\supset\{p:\text{prime}~|~p\equiv a\mod b\}$ in Corollary \ref{corbh} leads to $d(S(F(x,1)))<1$. Thus, Theorem \ref{q2} implies Corollary \ref{corbh} with $y=1$.

If $\deg F$ is even then we can remove the assumption that $K/\mathbf Q$ is a Galois extension.
\begin{theorem}
\label{fin2}
Let $K$ be a number field and $F(x,y)$ a polynomial of even degree in $\mathbf Z[x,y]$ whose irreducible factorization is \[F(x,y)=\prod_{j=1}^uF_j(x,y).\]
Let $p$ be a prime whose Frobenius map $(p,K_P/\mathbf Q)$ has the cycle type $[f_1,\ldots,f_r]$ such that $G_p(m;K)$ is odd for some odd $m$ and $C$ the conjugacy class of $(p,K/\mathbf Q)$.
If there exist positive integers $a$ and $b>1$ such that $\mathbf P(C)\cap \bigcap_{i} \mathbf P(\mathcal C_{F_i})\supset\{p:\text{prime}~|~p\equiv a\mod b\}$, then there exist only finitely many $l$ such that $\Pi_K(l)$ is represented {by} $F(x,y)$.
\end{theorem}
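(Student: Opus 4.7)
My approach follows the blueprint of Theorem~\ref{fin}, but replaces the divisibility obstruction ``$\deg F \nmid [K:\mathbf Q]$'' by a parity obstruction: $\deg F$ is even, while the $q$-adic valuation of $\Pi_K(l)$ will be shown odd for a suitable witness prime $q$. Set $m_0$ to be the least odd positive integer with $G_p(m_0;K)$ odd; such $m_0$ exists by hypothesis. Any prime $q \equiv a \pmod b$ lies in $\mathbf P(C)$, so its Frobenius has the same cycle type $[f_1,\ldots,f_r]$ as that of $p$, and in particular $G_q(m;K) = G_p(m;K)$ for every $m$. For such $q$, taken large enough to be unramified in $K$ and coprime to $a_n a_0 \Delta_{F,mod}$, splitting each ideal $\mathfrak a$ with $\mathfrak{Na}\le l$ into its $q$-part and coprime part yields
\[
v_q(\Pi_K(l)) \;=\; \sum_{N \ge 1} N\,\psi(N)\,M_q(l/q^N),
\]
where $\psi(N) = [x^N]\prod_i(1-x^{f_i})^{-1}$ counts ideals of $\mathcal O_K$ supported on primes above $q$ with norm $q^N$, and $M_q(x)$ counts ideals of $\mathcal O_K$ coprime to $q$ with norm at most $x$.

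The technical heart is to show $\psi(N) \equiv 0 \pmod 2$ for odd $N < m_0$ and $\psi(m_0) \equiv 1 \pmod 2$, so that the above sum reduced modulo $2$ collapses to the single term $N = m_0$. Using $(1-x^m)^2 \equiv 1 - x^{2m} \pmod 2$, iterated via the binary expansions of $G_p(m;K)$, one factors
\[
\prod_i (1-x^{f_i})^{-1} \;\equiv\; A(x)\,B(x) \pmod 2, \qquad A(x) = \prod_{\substack{m\ \mathrm{odd}\\ G_p(m;K)\ \mathrm{odd}}}(1-x^m)^{-1},
\]
with $B(x)$ a product of factors $(1-x^{d})^{-1}$ where $d$ is even, so $B(x)$ has nonzero coefficients only at even degrees. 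By minimality of $m_0$ the smallest exponent appearing in $A(x)$ is $m_0$, giving $[x^N]A(x) = 0$ for odd $0 < N < m_0$ and $[x^{m_0}]A(x) = 1$. Convolving with $B(x)$, whose support is even, then delivers the required parity of $\psi(N)$.

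Taking $l \in [q^{m_0}, 2q^{m_0})$ one has $l/q^{m_0} \in [1,2)$, so $M_q(l/q^{m_0}) = 1$ (only the unit ideal contributes), and the mod-$2$ sum reduces to $\psi(m_0) \equiv 1$. Hence $v_q(\Pi_K(l))$ is odd; since $\deg F$ is even and $q \in \bigcap_i \mathbf P(\mathcal C_{F_i})$ by hypothesis, Theorem~\ref{12} forbids $\Pi_K(l)$ from being represented by $F$. To cover all sufficiently large $l$, I would invoke a Bertrand-type estimate for primes in arithmetic progressions: for any $\varepsilon > 0$, every interval $(x,(1+\varepsilon)x)$ with $x$ large enough contains a prime $\equiv a \pmod b$. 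Applying this with $\varepsilon = 2^{1/m_0} - 1$ produces, for every sufficiently large $l$, a witness $q \in (l^{1/m_0}/2^{1/m_0},\, l^{1/m_0}]$, whence $l \in [q^{m_0}, 2q^{m_0})$ as required, completing the argument. The main obstacle is the mod-$2$ generating function analysis; the crucial point is that the auxiliary factor $B(x)$ contributes only at even degrees, so the vanishing of $\psi(N)$ at low odd $N$ is controlled purely by $A(x)$ and by the minimality of $m_0$.
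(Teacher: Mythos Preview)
Your argument is correct and follows the same strategy as the paper: for each sufficiently large prime $q\equiv a\pmod b$ one shows that $v_q(\Pi_K(l))$ is odd throughout $[q^{m_0},2q^{m_0})$, and then chains these intervals by a Bertrand-type bound for primes in the progression. The only visible difference is cosmetic: the paper establishes the parity of $a(q^N)=\psi(N)$ via the combinatorial identity
\[
a(q^N)=\sum_{\substack{a_1+2a_2+\cdots+Na_N=N\\ a_i\ge 0}}\ \prod_{i=1}^{N}\binom{G_p(i;K)+a_i-1}{a_i}
\]
together with the fact that $\binom{\text{even}}{\text{odd}}$ is even, whereas you compute the same parity by reducing the generating function $\prod_m(1-x^m)^{-G_p(m;K)}$ modulo $2$ using $(1-x^m)^2\equiv 1-x^{2m}$; these are equivalent calculations. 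If anything your exposition is more complete, since you write out the decomposition $v_q(\Pi_K(l))=\sum_{N\ge 1}N\,\psi(N)\,M_q(l/q^N)$ and make explicit that the parity of $\psi(N)$ for odd $N<m_0$ (not only $N=m_0$) is needed to isolate the single surviving term---a step the paper leaves implicit when it passes directly from ``$a(p^m)$ is odd'' to ``$\Pi_K(p^m)$ is not of the form in Theorem~\ref{12}''.
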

\begin{proof}
We assume that $G_p(i;K)$ is odd for some prime $p$ and some odd $i$, and the Frobenius map $(p,K_P/\mathbf Q)$ has the cycle type $[f_1,\ldots,f_r]$ in $K$. Let $m=\min\{i~|~G_p(i;K)\text{ is odd}\}$.
For all odd $i< m$, the number $G_p(i;K)$ is even.
Let $a(n)$ be the number of ideals of $\mathcal O_K$ with $\mathfrak{Na}=n$
It follows from the Chinese Remainder Theorem that the function $a(n)$ satisfies the multiplicative property
\[a(mn)=a(m)a(n)\hspace{5mm}\text{ if }\gcd(m,n)=1.\]
{The ideals $\mathfrak a$ such that $\mathfrak{Na}=p^m$ is expressed by product of prime ideals $\mathfrak p$ with $\mathfrak{Np}=p^k (k\le m)$. If $\mathfrak a$ is expressed as $\mathfrak p_1\cdots\mathfrak p_s$ and the number of $\mathfrak p_t$ with $\mathfrak{Np}_t=p^i$ equals $a_i$, then we have $a_1+\cdots+ma_m=m$.
By considering the number of combinations with {repetition}, we get}
\[a(p^m)=\sum_{\substack{a_1+\cdots+ma_m=m\\a_i\ge0}}\prod_{i=1}^m\binom{G_p(i;K)+a_i-1}{a_i}.\]
Since $m$ is odd, there exists an odd $i$ such that $a_i$ is odd in each product. For this odd $i$ \[\binom{G_p(i;K)+a_i-1}{a_i}\] is even, since binomial coefficients {$\binom{e}{o}$, where $e$ is an even number and $o$ is an odd number,} are always even. Therefore
\[
a(p^m)=\sum_{\substack{a_1+\cdots+(m-1)a_{m-1}=m\\a_i\ge0}}\prod_{i=1}^m\binom{G_p(i;K)+a_i-1}{a_i}+G_p(m)=odd.\]
Accordingly, $\Pi_K(p^m)$ is not of the form in Theorem \ref{12}. Since $p^m$-factor does not appear in $(p^m,2p^m)$, for all $l\in(p^m,2p^m)$ the left hand side $\Pi_K(l)$ is not of the form in Theorem \ref{12}.

On the other hand, {a} Bertrand type estimate for primes in an arithmetic progression $bt+a$ leads {to the conclusion} that there exists $c> 0$ such that for $x > c$ there is a prime $p\in\{p:\text{prime}~|~p\equiv a\mod b\}$ with $p\in (x,2^{\frac1{m}}x)$.
Let $\mathfrak p$ be a prime ideal of $K$ corresponding to $C$ with $\mathfrak{Np}=p^m>\max\{c^m, (a_n\Delta_{F,mod})^m, (a_0\Delta_{F,mod})^m\}$.
Since $p>c$, there {exists} a prime $q$ satisfying $q=bt_q+a\in(p,2^{\frac1m}p)$. By the assumption, we have $q\in\mathbf P(C)$ and there exist $\mathfrak q$ lying above $q$ with $\mathfrak{Nq}=q^m\in (p^m,2p^m)$.

As above, $\Pi_K(l)$ is not of the form in Lemma \ref{12} for $q^m\le l<2q^m$ and there exists a prime ideal $\mathfrak q_1$ corresponding to $C$ with $\mathfrak{Nq}_1\in (q^m,2q^m)$.

By induction, $\Pi_K(l)$ does not satisfy the condition in Theorem \ref{12} for all $l\ge p^m$.
This shows the theorem.
\end{proof}
From Lemma \ref{pp}, we observe that for {a} number field $K$ there exists an effectively computable constant $c(2) > 0$ such that for
$l > \exp(c(2)k(\log D)^2)$ there is a prime $p$ {such} that $p^n$ divides $\Pi_K(l)$ but $p^{n+1}$ does not divide $\Pi_K(l)$.
Therefore, we obtain the following result.
\begin{theorem}
Let $K$ be a number field and $F(x,y)$ a polynomial in $\mathbf Z[x,y]$ whose irreducible factorization is \[F(x,y)=\prod_{j=1}^uF_j(x,y)^{e_m},\]
where $F_j$ are distinct irreducible polynomials.
If $\min\{e_1,\ldots,e_u\}>[K:\mathbf Q]$, then there exist only finitely many $l$ such that $\Pi_K(l)$ is represented {by} $F(x,y)$.
\end{theorem}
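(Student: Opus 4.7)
The plan is to derive a contradiction from two opposing valuation estimates: a lower bound on $v_p(F(x_0,y_0))$ forced by the exponents $e_j$, and the upper bound on $v_p(\Pi_K(l))$ for some prime $p$ produced by the remark preceding the theorem (itself a consequence of Lemma~\ref{pp}).

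First I would record the multiplicity constraint coming from the factorization of $F$. If $(x_0,y_0)\in\mathbf{Z}^2$ satisfies $N:=F(x_0,y_0)=\prod_{j=1}^{u}F_j(x_0,y_0)^{e_j}\neq 0$, and $p$ is any rational prime with $p\mid N$, then $p\mid F_j(x_0,y_0)$ for at least one index $j$, so
\[
v_p(N)\ge e_j\cdot v_p(F_j(x_0,y_0))\ge e_j\ge \min_i e_i.
\]
Setting $k:=[K:\mathbf{Q}]$, the hypothesis $\min_i e_i>k$ therefore forces $v_p(N)>k$ for \emph{every} prime divisor of $N$.

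Next I would apply the remark immediately preceding the theorem: there exists an effectively computable constant $c(2)>0$ such that for every $l>\exp(c(2)k(\log D_K)^2)$ one can find a prime $p$ with $v_p(\Pi_K(l))\le k$. The point is that Lemma~\ref{pp}, applied in the Galois closure of $K$, produces a prime ideal whose norm $p^{f}$ lies in a short window such as $(l/2,\,l]$; the inequality $(p^{f})^{2}>l$ then rules out $v_{\mathfrak{p}}(\mathfrak{a})\ge 2$ for every ideal $\mathfrak{a}$ counted in $\Pi_K(l)$, and summing $v_p(\mathfrak{Na})=\sum_{\mathfrak{p}\mid p}f(\mathfrak{p})\,v_{\mathfrak{p}}(\mathfrak{a})$ over the at most $k$ primes above $p$ gives $v_p(\Pi_K(l))\le k$ (with equality when $K$ is Galois and $p$ is of the right splitting type). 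Assuming now that $F(x_0,y_0)=\Pi_K(l)$ for some $l$ above this threshold, the prime $p$ just produced divides $\Pi_K(l)=F(x_0,y_0)$, and the two estimates collide:
\[
k\ \ge\ v_p(\Pi_K(l))\ =\ v_p(F(x_0,y_0))\ \ge\ \min_i e_i\ >\ k,
\]
which is absurd. Since the threshold $\exp(c(2)k(\log D_K)^2)$ depends only on $K$, only finitely many $l$ can admit a solution.

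The main obstacle, albeit a modest one, is the translation from the Bertrand-type density in Lemma~\ref{pp} into the effective inequality $v_p(\Pi_K(l))\le k$ for the prime it produces. When $K$ is non-Galois the primes above $p$ may carry mixed inertia degrees, so one has to choose the window for $p^{f}$ carefully and verify that each ramified or small-degree prime above $p$ contributes at most once. Once this check is made the rest is a one-line pigeonhole, and note that neither the Galoisness of $K$ nor the irreducibility of the individual factors $F_j$ enters the argument beyond the factorization of $F$.
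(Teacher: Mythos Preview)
Your proposal is correct and follows essentially the same route as the paper: the paper does not give a separate proof but simply states the theorem as an immediate consequence of the observation (from Lemma~\ref{pp}) that for all sufficiently large $l$ one can find a prime $p$ with $v_p(\Pi_K(l))=[K:\mathbf Q]$, which you have reconstructed accurately. Your worry about non-Galois $K$ is harmless---applying Lemma~\ref{pp} with $C=\{1\}$ in the Galois closure yields a completely split prime $p\in(l/2,l]$, for which every ideal of norm $\le l$ divisible by a prime above $p$ has norm exactly $p$, giving $v_p(\Pi_K(l))=[K:\mathbf Q]$ on the nose and making the mixed-inertia discussion unnecessary.
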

For special quadratic forms, we give a sufficient condition for existence of infinitely many solutions. We denote the set of primes which is inert in $\mathbf Q(\sqrt \Delta)$ by $P_\Delta$.
\begin{theorem}
Let $K$ be a number field with $n=[K:\mathbf Q]$ and $D_K$ its discriminant.
Let $F(x,y)=ax^2+bxy+cy^2$ be a positive definite quadratic form with fundamental modified discriminant $\Delta_{F,mod}$, where one of $a$ and $c$ is a prime number or $1$. We denote $P_{\Delta_{F},D_K}=P_{\Delta_{F}}\setminus\{p|D_K\}$.
We assume that the class number of $\mathbf Q(\sqrt{\Delta_{F}})$ equals $1$.  If for all $p\in P_{\Delta_{F,mod},D_K}$ and for odd $i$, $G_p(i;K)$ is even, then there exist infinitely many $l$ such that $\Pi_K(l)$ is represented {by} $F(x,y)$.
\end{theorem}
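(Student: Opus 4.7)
The plan is to apply Theorem \ref{1234} to $N=\Pi_K(l)$ and exhibit infinitely many $l$ for which its three conditions simultaneously hold. Because $\Delta_{F,\mathrm{mod}}$ is fundamental and $h(\mathbf{Q}(\sqrt{\Delta_F}))=1$, the corresponding order $\mathcal{O}$ is the maximal order of $\mathbf{Q}(\sqrt{\Delta_F})$ and every one of its ideals is principal, so condition 3 of Theorem \ref{1234} is automatic for any candidate ideal factorization. The problem therefore reduces to verifying conditions 1 and 2, namely that every rational prime inert in $\mathbf{Q}(\sqrt{\Delta_F})$ (and, if $\Delta_{F,\mathrm{mod}}\equiv 5\pmod 8$, also $p=2$) appears to an even exponent in $a\Pi_K(l)$.

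The key tool is a Legendre-type formula for $v_p(\Pi_K(l))$. Writing $\Pi_K(l)=\prod_{\mathfrak{N}\mathfrak{a}\le l}\mathfrak{N}\mathfrak{a}$ and using unique factorization in $\mathcal{O}_K$ to tally multiplicities, for each rational prime $p$ unramified in $K$ one obtains
$$v_p(\Pi_K(l))=\sum_{\mathfrak{p}\mid p}f_{\mathfrak{p}}\sum_{k\ge 1}A_K\!\left(\tfrac{l}{p^{f_{\mathfrak{p}}k}}\right)=\sum_{f\ge 1}f\,G_p(f;K)\sum_{k\ge 1}A_K\!\left(\tfrac{l}{p^{fk}}\right),$$
where $A_K(x)=\#\{\mathfrak{a}\subset\mathcal{O}_K:\mathfrak{N}\mathfrak{a}\le x\}$. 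For every $p\in P_{\Delta_{F,\mathrm{mod}},D_K}$ the hypothesis forces $G_p(f;K)$ to be even whenever $f$ is odd, while $fG_p(f;K)$ is trivially even when $f$ is even. Every summand on the right is therefore even, so $v_p(\Pi_K(l))$ is even for \emph{every} $l$ at \emph{every} such prime, and condition 2 of Theorem \ref{1234} is automatically satisfied at all unramified inert primes not dividing $D_K$.

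Only finitely many inert primes remain: those in $P_{\Delta_F}$ that divide $D_K$, together with $p=2$ when condition 1 must be checked. For each such $p$, $v_p(\Pi_K(l))$ is a nondecreasing step function of $l$ whose jumps occur at multiples of $p$ with size $v_p(l)\cdot a_K(l)$, where $a_K(n)=\#\{\mathfrak{a}\subset\mathcal{O}_K:\mathfrak{N}\mathfrak{a}=n\}$. Since only finitely many such primes are involved, the resulting parity vector in $(\mathbf{Z}/2)^M$ takes at most $2^M$ values as $l$ varies; a pigeonhole argument, combined with the Bertrand-type estimate of Lemma \ref{pp} applied to the Galois closure of $K$ to supply arbitrarily large $l$ of any desired type, produces infinitely many $l$ on which this vector matches the parity of $v_p(a)$ coordinate by coordinate. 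Assembling these inputs, conditions 1--3 of Theorem \ref{1234} all hold for infinitely many $l$, which gives the desired representation $\Pi_K(l)=F(x,y)$.

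The main obstacle is precisely the last step: simultaneous parity control at the finitely many ramified inert primes, since the hypothesis gives no information about $G_p(i;K)$ there. I expect the pigeonhole/translation argument together with Lemma \ref{pp} to settle this, because the combinatorial data is finite while the supply of admissible $l$ is infinite; once this step is confirmed, the remainder of the argument is a direct application of Theorem \ref{1234}.
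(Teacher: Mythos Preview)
Your overall architecture matches the paper: reduce to Theorem~\ref{1234}, note that class number $1$ makes condition~3 automatic, show that the inert primes in $P_{\Delta_F,D_K}$ always carry even exponent in $\Pi_K(l)$, and then deal separately with the finitely many inert primes dividing $D_K$ (and with $p=2$ when $\Delta_{F,\mathrm{mod}}\equiv 5\pmod 8$). Your Legendre-type identity
\[
v_p(\Pi_K(l))=\sum_{f\ge 1} f\,G_p(f;K)\sum_{k\ge 1}A_K\!\left(\frac{l}{p^{fk}}\right)
\]
is correct and gives the evenness at the good inert primes more directly than the paper, which instead shows that $a(p^m)$ is even for every odd $m$ via the combinatorial expansion
\[
a(p^m)=\sum_{\substack{a_1+\cdots+ma_m=m\\ a_i\ge 0}}\prod_{i=1}^m\binom{G_p(i;K)+a_i-1}{a_i}
\]
and the parity of $\binom{e}{o}$. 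Both routes work.

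The gap is exactly where you flag it, and the pigeonhole idea does not close it. Pigeonhole only tells you that \emph{some} parity vector in $(\mathbf Z/2)^M$ is attained for infinitely many $l$; it gives no reason why the \emph{particular} target vector $(v_p(a)\bmod 2)_p$ is ever attained, let alone infinitely often. Lemma~\ref{pp} supplies primes in short intervals but carries no parity information about $v_p(\Pi_K(l))$ at the bad primes, so invoking it here does not help.

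The paper handles this step with a different, constructive device. For a bad inert prime $p$ it takes $m$ to be the least odd $i$ with $G_p(i;K)$ odd, so that $a(p^m)$ is odd, and then uses Chebotarev's density theorem to produce infinitely many rational primes $q$ that split completely in $K$. For such $q$ one has $a(q^k)=\binom{n+k-1}{n-1}$, and this binomial coefficient is odd for infinitely many $k$; hence $a(p^m q^k)=a(p^m)a(q^k)$ is odd infinitely often. These values of $l=p^mq^k$ are the places at which the $p$-parity of $\Pi_K(l)$ flips, and because only finitely many bad primes are in play one can combine these flips to land on the required parity pattern infinitely often. Replacing your pigeonhole step with this Chebotarev/binomial-parity argument completes the proof along the paper's lines.
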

\begin{proof}
We assume for all $p\in P_{\Delta_{F},D_K}$ and odd $i$, $G_p(i;K)$ is even.
It suffices to show that the prime factorization of $\Pi_K(l)$ contains no prime $p\in P_{\Delta_{F}, D_K}$ raised to an odd power for infinitely many $l$.
From the multiplicative property of $a(n)$ we show $a(p^m)$ is even for all primes $p\in P_{\Delta_{F},D_K}$ and odd $m$ in the following.
As {in} the proof of Theorem \ref{fin2}, we get
\[a(p^m)=\sum_{\substack{a_1+\cdots+ma_m=m\\a_i\ge0}}\prod_{i=1}^m\binom{G_p(i;K)+a_i-1}{a_i}.\]
Now we assume $G_p(i;K)$ is even for all odd $i$. Since $m$ is odd, there exists an odd $i$ such that $a_i$ is odd in each product. As we remarked above, one of the binomial coefficients in the above product are even. Therefore, $a(p^m)$ is a sum of even numbers, $a(p^m)$ is also even.

If $G_p(i;K)$ is odd for some $p\in P_{\Delta_F}\setminus P_{\Delta_{F},D_K}$ and some odd $i$, we denote $m=\min\{i:\text{odd}~|~G_p(i,K) \text{ is odd}\}$. As we mentioned above, $a(p^m)$ is odd. Chebotarev's density theorem says that for any number fields $K$ there exist infinitely many primes splitting completely in $K$. Let $q$ be a prime splitting completely in $K$. Then we have $a(q^k)=\binom{n+k-1}{n-1}$. One can see easily that $\binom{n+k-1}{n-1}$ takes odd values infinitely many times and $a(p^mq^k)$ does.
Since $P_{\Delta_F}\setminus P_{\Delta_{F},D_K}$ is a finite set, $\Pi_K(l)$ satisfies the first and second conditions in Theorem \ref{1234} infinitely many times.
By assumption, the third condition in Theorem \ref{1234} is trivial.
This shows the theorem.
\end{proof}
\section{Some generalizations}
In the previous sections, we deal with two variables homogeneous polynomial. Naturally, we have an interest in the Brocard-Ramanujan problem for multi-variable homogeneous polynomial. In this section, we consider {polynomials in more variables}.

It is known that a positive integer $n$ is the sum of three squares of integers if and only if it is not of the form $4^l(8k + 7)$, where $k, l$ are non-negative integers. Using this criterion, we check that there are infinitely many $l$ such that $l!$ is represented {by} $x^2+y^2+z^2$.
Therefore, irreducibility of polynomials $f(x_1,\ldots,x_n)$ is not important for the finiteness of the solutions of $f(x_1,\ldots,x_n)=l!$.

In this paper, we consider the {equations involving} norm forms and factorial functions.
Let $\mathcal O$ be an order of number field $K$ and $\{\alpha_1,\ldots,\alpha_n\}$ be their basis over $\mathbf Z$.
Then the norm form $N_{\alpha_1,\ldots,\alpha_n}$ is defined by
\[N_{\alpha_1,\ldots,\alpha_n}(x_1,\ldots,x_n)=\prod_{\sigma\in{\rm Aut}(K/\mathbf Q)}\sigma\left(\sum_{i=1}^n\alpha_ix_i\right).\]
There exists the matrix $A$ converting the basis $\{\alpha_1,\ldots,\alpha_n\}$ to the basis $\{1,\alpha_2',\ldots,\alpha_n'\}$ of $\mathcal O$.
Also, since $A\in{\rm SL}_n(\mathbf Z)$, an integer $N$ is represented {by} $N_{\alpha_1,\ldots,\alpha_n}$ if and only if $N$ is also represented {by} $N_{1,\alpha_2',\ldots,\alpha_n'}$.
Therefore, it suffices to consider the case $\alpha_1=1$.

As {a} corollary of Theorem \ref{main1} we have
\begin{corollary}
For any norm form $N_{\alpha_1,\alpha_2}$ of quadratic fields, there exists only finitely many $l$ such that $H_l$ is represented {by} $N_{\alpha_1,\alpha_2}(x_1,x_2)$.
\end{corollary}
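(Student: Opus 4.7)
The plan is to reduce this corollary to Theorem~\ref{main1} by recognizing any quadratic norm form, after the $\mathrm{SL}_2(\mathbf{Z})$ change of basis already noted in the paragraph preceding the corollary, as an irreducible binary form of degree two.

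First, by that preceding paragraph, there is an $A \in \mathrm{SL}_2(\mathbf{Z})$ sending $\{\alpha_1,\alpha_2\}$ to a basis of the shape $\{1,\alpha\}$ of the order, and the change of basis preserves the set of integers represented by the norm form. Since $\{1,\alpha\}$ is a $\mathbf{Z}$-basis of an order in the quadratic field $K$, necessarily $\alpha \in K \setminus \mathbf{Q}$. So I may assume $N_{\alpha_1,\alpha_2} = N_{1,\alpha}$. Denoting the nontrivial element of $\mathrm{Gal}(K/\mathbf{Q})$ by $\sigma$, the defining product unfolds to
\[
N_{1,\alpha}(x,y) = (x + \alpha y)(x + \sigma(\alpha) y) = x^2 + \mathrm{Tr}(\alpha)\,xy + N(\alpha)\,y^2 \in \mathbf{Z}[x,y],
\]
a homogeneous polynomial of degree two.

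Next I check irreducibility. The discriminant of $N_{1,\alpha}(x,1)$ as a polynomial in $x$ equals $\mathrm{Tr}(\alpha)^2 - 4N(\alpha) = (\alpha - \sigma(\alpha))^2$. Because $\alpha \notin \mathbf{Q}$, the element $\alpha - \sigma(\alpha)$ is a nonzero element of $K$ satisfying $\sigma(\alpha - \sigma(\alpha)) = -(\alpha - \sigma(\alpha))$, hence lies in $K \setminus \mathbf{Q}$. Consequently $(\alpha - \sigma(\alpha))^2$ is a nonzero rational number which is not a rational square, so $N_{1,\alpha}(x,1)$ has no rational root and $N_{1,\alpha}(x,y)$ is irreducible in $\mathbf{Z}[x,y]$.

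Finally, Theorem~\ref{main1} applied to $F = N_{1,\alpha}$ yields that only finitely many $l$ satisfy $l! = N_{1,\alpha}(x,y)$ for some $(x,y) \in \mathbf{Z}^2$. To upgrade this to the three sequences $H_l$ named in the introduction, I invoke the remark following Theorem~\ref{fin}: the $p$-adic valuations of each such $H_l$ exhibit the same regularity used in the Bertrand-type induction driving the proof of Theorem~\ref{main1}, so the identical argument gives the same conclusion with $H_l$ in place of $l!$. No substantive obstacle is anticipated; the entire content of the corollary beyond Theorem~\ref{main1} is the recognition of a quadratic norm form as an irreducible binary quadratic form, which in turn reduces to the observation that a quadratic extension has no rational primitive element.
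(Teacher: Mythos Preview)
Your proof is correct and follows exactly the approach the paper intends: the paper states this result simply as ``a corollary of Theorem~\ref{main1}'' without further argument, and you have spelled out precisely the two small verifications needed---that a quadratic norm form is (after the $\mathrm{SL}_2(\mathbf Z)$ base change) an irreducible binary quadratic form, and that the passage from $l!$ to the sequences $H_l$ goes through by the regularity remark after Theorem~\ref{fin}.
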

We generalize this corollary to all norm forms by following the proof of Theorem \ref{main1} as follows.
\begin{theorem}
\label{norm}
For any order $\mathcal O\not=\mathbf Z$ of a number field and their basis $\{\alpha_1,\ldots,\alpha_n\}$ over $\mathbf Z$, there exists only finitely many $l$ such that $H_l$ is represented {by} $N_{\alpha_1,\ldots,\alpha_n}(x_1,\ldots,x_n)$.
\end{theorem}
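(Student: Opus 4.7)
The plan is to mirror the proof of Theorem~\ref{main1} with the binary form $F(x,y)$ replaced by the norm form. Following the reduction recorded just before the statement we may take $\alpha_1=1$. Let $K$ be the number field of which $\mathcal{O}$ is an order; since $\mathcal{O}\ne\mathbf{Z}$ we have $n=[K:\mathbf{Q}]\ge 2$. Let $L$ be the Galois closure of $K/\mathbf{Q}$, and set $G={\rm Gal}(L/\mathbf{Q})$, $H={\rm Gal}(L/K)$. The action of $G$ on $G/H$ realizes $G$ as a transitive subgroup of $S_n$, so Lemma~\ref{class} supplies $\sigma\in G$ whose cycle type on $G/H$ has every part $\ge 2$; let $C$ denote its conjugacy class in $G$.

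The step replacing Lemma~\ref{123} in this setting is the local claim: for every prime $p$ unramified in $L$ with Frobenius in $C$, any $\xi\in\mathcal{O}$ with $p\mid N_{K/\mathbf{Q}}(\xi)$ automatically satisfies $p^2\mid N_{K/\mathbf{Q}}(\xi)$. Indeed, Theorem~\ref{frob} identifies the cycle type of ${\rm Frob}_p$ on $G/H$ with the multiset of residue degrees $\{f(\mathfrak{p}|p):\mathfrak{p}\mid p\text{ in }\mathcal{O}_K\}$, so each $f(\mathfrak{p}|p)\ge 2$. From
\[
|N_{K/\mathbf{Q}}(\xi)|=\prod_{\mathfrak{p}\mid p}\mathfrak{Np}^{v_\mathfrak{p}(\xi)}\cdot M,\qquad \gcd(M,p)=1,
\]
and $p\mid N(\xi)$ we get some $v_\mathfrak{p}(\xi)\ge 1$, contributing $\mathfrak{Np}=p^{f(\mathfrak{p}|p)}\ge p^2$. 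Since the norm is computed intrinsically in $K$ and $\mathcal{O}\subset\mathcal{O}_K$, the argument is insensitive to whether $\mathcal{O}$ is maximal.

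With this local obstruction the conclusion proceeds verbatim as in Theorem~\ref{main1}. For each highly divisible sequence $H_l$, every sufficiently large prime $p\in\mathbf{P}(C)$ divides $H_l$ to the first power throughout a non-empty interval $I_p$ (for $H_l=l!$ this is $[p,2p)$, with analogous intervals for the lcm and primorial cases), so the local claim rules out $H_l=N_{\alpha_1,\ldots,\alpha_n}(x_1,\ldots,x_n)$ whenever $l\in I_p$. Lemma~\ref{pp} applied to $L/\mathbf{Q}$ produces an effectively computable $X_0$ above which every $p\in\mathbf{P}(C)$ is accompanied by some $q\in\mathbf{P}(C)\cap(p,2p)$, so the intervals $I_p$ chain to cover $[p_0,\infty)$ for a suitable $p_0$, leaving only finitely many admissible $l$. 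The main point that requires care is the routine one of discarding a finite set of primes—those ramified in $L$ and those below the threshold of Lemma~\ref{pp}—which contributes at most finitely many exceptional $l$ and is absorbed into the final count exactly as in Theorem~\ref{main1}.
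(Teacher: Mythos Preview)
Your argument is correct and follows the same template as the paper's: choose a conjugacy class $C$ in the Galois closure whose cycle type on cosets has all parts $\ge 2$ (via Lemma~\ref{class}), establish the local obstruction that primes in $\mathbf P(C)$ must divide any value of $N_A$ to exponent at least $2$, and then chain Bertrand-type intervals via Lemma~\ref{pp} exactly as in Theorem~\ref{main1}. The only difference lies in how the local step is packaged. The paper records it as Lemma~\ref{normrep}, reached by specializing $N_A$ to the binary form $F(x_1,x_2)=N_A(x_1,x_2,0,\ldots,0)$ and appealing to the $\mathcal C_F$ machinery of Section~3; you instead derive it directly from the ideal-norm factorization $|N_{K/\mathbf Q}(\xi)|=\prod_{\mathfrak p}\mathfrak{N}\mathfrak p^{\,v_{\mathfrak p}(\xi)}$ in $\mathcal O_K$. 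Your route is a bit more streamlined---it avoids singling out $\alpha_2$ and the detour through a two-variable specialization---but the underlying content is the same.
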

Using the following lemma, we can show Theorem \ref{norm} by the same argument with the proof of Theorem \ref{main1}.
\begin{lemma}
\label{normrep}
Let $\mathcal O$ be an order of a number field $K\neq\mathbf Q$ and $A=\{1,\alpha_2,\ldots,\alpha_n\}$ be a basis of $\mathcal O$ over $\mathbf Z$.
Let $F(x_1,x_2)$ be the homogeneous polynomial obtained by substituting $x_i=0$ for $i\ge3$ into $N_A(x_1,\ldots,x_n)$.
Let $N$ be an integer with \[N=p_1\cdots p_sq_1^{l_1}\cdots q_t^{l_t},\]
where $q_i$ are primes corresponding to a conjugacy class $C\in\mathcal C_F$ with\\ $\gcd(q_i,F(0,1)\Delta_{F})=1$ and $p_i$ are the other primes.
If $N$ is represented {by} $N_A$ then $l_i\ge2$ for all $i$.
\end{lemma}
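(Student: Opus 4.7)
The approach is to recognize $N_A(x_1,\ldots,x_n)$ as the field norm $N_{K/\mathbf Q}(\beta)$ of $\beta:=x_1+\alpha_2x_2+\cdots+\alpha_nx_n\in\mathcal O\subset\mathcal O_K$, and then to translate the divisibility $q_i\mid N$ into the existence of a prime ideal $\mathfrak q$ of $\mathcal O_K$ above $q_i$ with $\beta\in\mathfrak q$ whose residue degree the hypothesis forces to be at least $2$.

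First I would observe that the hypothesis $\gcd(q_i,F(0,1)\Delta_F)=1$ implicitly forces $\Delta_F\neq 0$. Since
\[
F(x,1)=\prod_{\sigma}\bigl(x+\sigma(\alpha_2)\bigr)=m(x)^{[K:\mathbf Q(\alpha_2)]},
\]
where $m$ is the minimal polynomial of $-\alpha_2$, the condition $\Delta_F\neq 0$ is equivalent to $\alpha_2$ generating $K$ over $\mathbf Q$. In this case $F(x,1)$ agrees with $m(x)$ up to sign, $K_F$ is the Galois closure of $K$, and the coprimality $q_i\nmid\Delta_F$ ensures both that $q_i$ is unramified in $\mathcal O_K$ and that $q_i$ is coprime to the conductor of $\mathbf Z[\alpha_2]$ in $\mathcal O_K$. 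By Dedekind's theorem, the factorization of $q_i\mathcal O_K$ then mirrors that of $m(x)\bmod q_i$.

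Now assume $q=q_i$ divides $N=N_{K/\mathbf Q}(\beta)$. Passing to the ideal identity $(N)=N_{K/\mathbf Q}\bigl((\beta)\bigr)$ in $\mathbf Z$, some prime $\mathfrak q$ of $\mathcal O_K$ above $q$ must contain $\beta$, so its absolute norm $N_{K/\mathbf Q}(\mathfrak q)=q^{f(\mathfrak q\mid q)}$ divides $N$. Because $q\in\mathbf P(C)$ with $C\in\mathcal C_F$, Theorem \ref{frob} identifies the residue degrees of the primes of $\mathcal O_K$ above $q$ with the cycle type $[f_1,\ldots,f_r]$ of the Frobenius class $C$, and the defining property of $\mathcal C_F$ guarantees every $f_j\geq 2$. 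Hence $f(\mathfrak q\mid q)\geq 2$, so $q^2\mid N$, giving $l_i\geq 2$ as required.

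The main obstacle is aligning the conjugacy-class data in $G_F=\mathrm{Gal}(K_F/\mathbf Q)$ with the residue-degree data for primes of $\mathcal O_K$ lying above $q$; this is precisely the content of Theorem \ref{frob}, once $F(x,1)$ has been identified with the minimal polynomial of the primitive element $\alpha_2$ (whence its leading coefficient is $1$ and $K_F$ is the splitting field of $m$). A secondary subtlety is that we work with the order $\mathcal O$ rather than with $\mathcal O_K$, but since $\beta\in\mathcal O\subset\mathcal O_K$ the norm computation and all ideal-theoretic reasoning transfer harmlessly to $\mathcal O_K$.
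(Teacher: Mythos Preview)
Your argument is correct. The paper does not supply a proof of this lemma; it states the lemma, remarks afterward that considering other intermediate fields $\mathbf Q(\alpha_i)$ would refine the conclusion, and otherwise leaves the details to the reader as an analogue of Lemma~\ref{123}. Your ideal-theoretic reading is exactly the natural way to execute that analogue: identify $N_A(x_1,\ldots,x_n)$ with $N_{K/\mathbf Q}(\beta)$ for $\beta=x_1+\alpha_2x_2+\cdots+\alpha_nx_n\in\mathcal O\subset\mathcal O_K$; use $\Delta_F\neq 0$ to force $\alpha_2$ to be a primitive element, so that $F(x,1)$ is the minimal polynomial of $-\alpha_2$ and $K_F$ is the Galois closure of $K$; use $q_i\nmid\Delta_F$ to guarantee $q_i$ is unramified in $\mathcal O_K$ and coprime to the conductor of $\mathbf Z[\alpha_2]$, so that Dedekind and Theorem~\ref{frob} together identify the residue degrees above $q_i$ with the cycle type of $C$; and conclude from $q_i\mid N$ that $\beta$ lies in some prime $\mathfrak q\mid q_i$ of residue degree $\ge 2$, whence $q_i^2\mid N$. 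This is fully in the spirit of the paper's surrounding arguments and of the Remark following the lemma.

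Two incidental observations. First, your argument never uses the factor $F(0,1)$ in the coprimality hypothesis, so you have actually proved the lemma under the weaker assumption $\gcd(q_i,\Delta_F)=1$. Second, where you write ``$F(x,1)$ agrees with $m(x)$ up to sign'', in fact $F(x,1)=\prod_\sigma(x+\sigma(\alpha_2))$ is monic and equals $m(x)$ exactly, so no sign adjustment is needed.
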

\begin{remark}
By considering other intermediate fields $\mathbf Q(\alpha_i)$ of $K/\mathbf Q$, we {can} characterize the prime factorization of integers represented {by} $N_A$ more specifically.
\end{remark}

Next, we replace the right hand side of the equation $N_A(x)=l!$ with the Bhargava factorial. 
Bhargava introduced a generalization of the factorial function to
generalize classical results in $\mathbf Z$ to Dedekind domains and unify them \cite{bh97,bh00}. In this section, we consider the number of solutions for equations {involving} polynomials and the Bhargava factorial. Since the ordinary factorial $l!$ is one of examples of the Bhargava factorial, we regard this equation as one of the generalizations of the Brocard-Ramanujan problem. The Bhargava factorial is defined as follows.

Let $S$ be an infinite subset of $\mathbf Z$. First, we define $p$-ordering of $S$. A $p$-ordering of $S$ is any sequence $\{a_n\}$ of elements of $S$ that is formed as follows:
\begin{itemize}
    \item Choose any element $a_0\in S$;
    \item For $k\ge1$ choose an element $a_n\in S$ such that
\[v_p\left(\prod_{k=0}^{n-1}(a_n-a_k)\right)= \inf_{x\in S}v_p\left(\prod_{k=0}^{n-1}(x-a_k)\right),\]
where $v_p$ is the $p$-adic valuation defined by
$v_p(p^va)=v$ with an integer $a$ relatively prime to $p$.
\end{itemize}
For a $p$-ordering of $S$, we construct the $p$-sequence $\{v_p(n;S)\}$ as \[v_p(n;S)=v_p\left(\prod_{k=0}^{n-1}(a_n-a_k)\right).\]
It is known that the associated $p$-sequence of $S$ is independent of the choice of $p$-ordering of $S$ \cite{bh00}.

With these settings, we define the Bhargava factorial $l!_S$ by
\[l!_S=\prod_{p:\text{prime}}p^{v_{S}(l;p)}.\]
We give some examples of the Bhargava factorial.
When $S=\mathbf Z$, we can choose the natural ordering $0,1,2,3,\ldots$ as $p$-ordering of $l!_S$ for all primes $p$ and find $l!_S$ is the ordinary factorial $l!$. This is why we can regard the equation $P(x)=l!_S$ as one of the generalizations of the Brocard-Ramanujan problem $x^2-1=l!$.
Also, when $S(a,b)=\{an+b~|~n\in\mathbf Z\}$ for some $a,b\in\mathbf Z$ then $l!_{S(a,b)}=a^ll!$. Since the {arguments in} the proof of Theorem \ref{main1} also works for the equation $P(x)=l!_{S(a,b)}$, we obtain the finiteness of solutions $(x,l)$ for the equation $P(x)=l!_{S(a,b)}$.

In this section, we point out that we can generalize Luca's result by following his proof. Luca showed that the Oesterl\'e-Masser conjecture implies that the equation $P(x)=l!$ has only finitely many solutions $(x,l)$ \cite{lu02}. In the proof of this result, Luca used the facts that ${\rm rad}(l!)<4^l$ and the Stirling formula $\log l!\sim l\log l$ as $l\rightarrow\infty$ and he estimate ${\rm rad}(l!)$ and $l!$. Therefore, if we {can} estimate $l!_S$ and ${\rm rad}(l!_S)$, we can {argue as in} the proof of Luca's result. We {summarize} this more general form.
\begin{theorem}[cf. Luca. \cite{lu02}]
\label{summ}
Let $P(x)\in\mathbf Z[x]$ be a polynomial of $\deg P\ge2$ and $F(l)$ be a function satisfying ${\rm rad}(F(l))=o(F(l))$ as $l\rightarrow\infty$.
Then the Oesterl\'e-Masser conjecture implies that the equation $P(x)=F(l)$ has only finitely many solutions $(x,l)$.
\end{theorem}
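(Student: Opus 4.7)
The plan is to adapt Luca's proof for $F(l)=l!$ almost verbatim, replacing the two special properties of $l!$ used there (namely $\mathrm{rad}(l!)\le 4^l$ and $\log l!\sim l\log l$) with the hypothesis on $F$. I begin by assuming, for contradiction, that the equation $P(x)=F(l)$ admits infinitely many integer solutions. Writing $P(x)=a_nx^n+\cdots+a_0$ with $n=\deg P\ge 2$, the first step is a Tschirnhaus substitution $y=n a_n x+a_{n-1}$, which produces the identity
\[
y^n+R(y)=n^n a_n^{n-1}F(l),
\]
where $R\in\mathbf Z[y]$ has $\deg R\le n-2$ (the $y^{n-1}$ term is cancelled by the choice of shift). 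When $R\equiv0$ the equation reduces to $y^n=\text{const}\cdot F(l)$ and is easily handled separately; otherwise this identity furnishes the ABC-triple we shall feed into the conjecture.

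Next, I would view $y^n+(-R(y))=n^n a_n^{n-1}F(l)$ as an ABC-triple, dividing through by the greatest common divisor of the three terms to ensure coprimality. Using the radical bounds $\mathrm{rad}(y^n)=\mathrm{rad}(y)\le|y|$, $\mathrm{rad}(R(y))\le|R(y)|\ll|y|^{n-2}$, and $\mathrm{rad}(n^na_n^{n-1}F(l))\ll\mathrm{rad}(F(l))$, the Oesterl\'e-Masser conjecture yields, for any $\varepsilon>0$,
\[
|y|^n\ll_\varepsilon \bigl(|y|^{n-1}\cdot\mathrm{rad}(F(l))\bigr)^{1+\varepsilon}.
\]
Since $|y|^n\asymp F(l)$, this rearranges to
\[
F(l)^{(1-(n-1)\varepsilon)/n}\ll_\varepsilon \mathrm{rad}(F(l))^{1+\varepsilon}.
\]

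Finally, I would fix $\varepsilon$ small enough that the exponent of $F(l)$ on the left remains positive, and then use the growth hypothesis on $\mathrm{rad}(F(l))$ to force the left-hand side past the right-hand side as $l\to\infty$, producing the contradiction. The main obstacle is calibrating $\varepsilon$ against the hypothesis: the inequality really requires $\mathrm{rad}(F(l))$ to be dominated by a power of $F(l)$ with exponent strictly less than $1$, which is made transparent by the quantitative form $\log\mathrm{rad}(F(l))=o(\log F(l))$ implicit in the motivating example (where $\log\mathrm{rad}(l!)=O(l)$ while $\log l!\sim l\log l$). Modulo this calibration of $\varepsilon$ to match the strength of the growth assumption, all remaining ingredients, namely the Tschirnhaus reduction, the handling of the common factor, and the ABC-inequality itself, transfer verbatim from Luca's argument to the present general setting.
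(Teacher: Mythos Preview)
The paper does not actually supply its own proof of this statement; it simply remarks that Luca's argument for $F(l)=l!$ goes through once one controls $\mathrm{rad}(F(l))$ against $F(l)$, and then records the theorem.  Your outline is precisely Luca's argument, so there is nothing to compare at the level of method.

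That said, you have put your finger on a genuine defect, and in fact the theorem \emph{as stated} is false.  Two points.  First, your claim that the case $R\equiv 0$ ``is easily handled separately'' is true for $l!$ (Bertrand's postulate produces a prime dividing $l!$ to the first power), but not for a general $F$: take $P(x)=x^{2}$ and $F(l)=4^{l}$; then $\mathrm{rad}(F(l))=2=o(F(l))$, yet $x=2^{l}$ gives infinitely many solutions.  This case does not go away even under your strengthened hypothesis $\log\mathrm{rad}(F(l))=o(\log F(l))$.  Second, for $R\not\equiv 0$ your ABC inequality only yields $\mathrm{rad}(F(l))\gg_{\varepsilon} F(l)^{1/n-\varepsilon}$, so to force a contradiction one needs $\mathrm{rad}(F(l))\ll F(l)^{\alpha}$ with $\alpha<1/n$, not merely $\alpha<1$ as you write; the condition $\log\mathrm{rad}(F(l))=o(\log F(l))$ does supply this, but $\mathrm{rad}(F(l))=o(F(l))$ does not (for instance $P(x)=x^{2}+x$ and $F(l)=2^{l}(2^{l}-1)$ satisfies the paper's hypothesis while $x=2^{l}-1$ solves $P(x)=F(l)$ for every $l$).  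A correct formulation therefore needs both the logarithmic smallness of the radical and either an extra hypothesis on $F$ or the exclusion of polynomials of the shape $a_n\bigl(x+\tfrac{a_{n-1}}{na_n}\bigr)^{n}$; Luca's original setting $F(l)=l!$ happens to enjoy both features, which is why his proof goes through there.
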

We check that the Bhargava factorial for an infinite subset $S\subset \mathbf Z$ satisfies the condition of Theorem \ref{summ}. Since $l!|l!_S$, for all primes $p$, the $p$-adic valuation $v_p(l!_S)$ tends to infinity as $l\rightarrow\infty$. Therefore, as $l\rightarrow\infty$, ${\rm rad}(l!_S)=o(l!_S)$.
\begin{corollary}
Let $P(x)\in\mathbf Z[x]$ be a polynomial of $\deg P\ge2$ and $S$ be an infinite subset of $\mathbf Z$.
Then the Oesterl\'e-Masser conjecture implies that the equation $P(x)=l!_S$ has only finitely many solutions $(x,l)$.
\end{corollary}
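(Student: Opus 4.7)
The plan is to deduce this corollary directly from Theorem \ref{summ} by taking $F(l) = l!_S$, so the only real work is to verify the growth hypothesis $\mathrm{rad}(l!_S) = o(l!_S)$ as $l \to \infty$. Once that is established, the hypothesis of Theorem \ref{summ} holds and the conclusion follows immediately from the assumed Oesterl\'e-Masser conjecture.

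To verify $\mathrm{rad}(l!_S) = o(l!_S)$, I would use the standard divisibility $l! \mid l!_S$, which is one of the basic properties of the Bhargava factorial (it follows from the fact that the ordinary factorial arises as the Bhargava factorial for the full set $\mathbf Z$, and $l!_S / l!_{\mathbf Z}$ is an integer whenever $S \subset \mathbf Z$). This divisibility implies that for every prime $p$ one has $v_p(l!_S) \ge v_p(l!)$, and by Legendre's formula $v_p(l!) \to \infty$ as $l \to \infty$ (in particular $v_2(l!) \sim l$).

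With this in hand, I would write
\[
\frac{l!_S}{\mathrm{rad}(l!_S)} \;=\; \prod_{p \mid l!_S} p^{\,v_p(l!_S)-1} \;\ge\; 2^{\,v_2(l!_S)-1} \;\ge\; 2^{\,v_2(l!)-1},
\]
and the right-hand side tends to infinity with $l$. Equivalently, $\mathrm{rad}(l!_S)/l!_S \to 0$, which is exactly the hypothesis $\mathrm{rad}(F(l)) = o(F(l))$ needed to invoke Theorem \ref{summ}.

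There is not really a hard step here: the entire content is the divisibility $l! \mid l!_S$, which is recorded in Bhargava's foundational papers \cite{bh97,bh00} and is the sole ingredient that connects the a priori abstract Bhargava factorial to concrete lower bounds on $p$-adic valuations. Once that is invoked, applying Theorem \ref{summ} with $F(l)=l!_S$ immediately finishes the proof, so the corollary is essentially a restatement of Theorem \ref{summ} specialized to this family of $F$.
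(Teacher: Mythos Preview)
Your proposal is correct and follows essentially the same approach as the paper: the paper also invokes the divisibility $l!\mid l!_S$ to conclude that $v_p(l!_S)\to\infty$ for every prime $p$, hence $\mathrm{rad}(l!_S)=o(l!_S)$, and then applies Theorem~\ref{summ}. Your version is slightly more explicit in isolating the prime $2$ and writing out the inequality $l!_S/\mathrm{rad}(l!_S)\ge 2^{v_2(l!)-1}$, but the underlying idea is identical.
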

For some special case, we can show the finiteness of solutions for the equation $P(x)=l!_S$ unconditionally.
\begin{theorem}
Let $N_{\alpha_1,\ldots,\alpha_n}(x_1,\ldots,x_n)$ be a norm form of number field $K\neq\mathbf Q$.
For a polynomial $f(x)=ax^2+bx+c$ with $(a,b,c)\in\mathbf Z^3-\{(0,0,c)~|~c\in\mathbf Z\}$ we denote $S=f(\mathbf Z)$. Then there exist only finitely many $l$ such that $l!_S$ is represented {by} $N_{\alpha_1,\ldots,\alpha_n}(x_1,\ldots,x_n)$.
\end{theorem}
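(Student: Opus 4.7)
The plan is to follow the scheme of Theorem \ref{norm} (in turn based on Theorem \ref{main1}) with the Bhargava factorial $l!_S$ in place of the ordinary $l!$. Define the binary form $F(x_1,x_2)$ as in Lemma \ref{normrep} by substituting $x_i=0$ for $i\ge 3$ in $N_A = N_{\alpha_1,\ldots,\alpha_n}$; since $K\neq\mathbf Q$, we have $\deg F\ge 2$, and by Lemma \ref{class} the collection $\mathcal{C}_F$ contains some conjugacy class $C$. By Lemma \ref{normrep}, whenever $l!_S$ is represented by $N_A$, every prime $p\in\mathbf{P}(C)$ with $\gcd(p, F(0,1)\Delta_F)=1$ that divides $l!_S$ must divide it with multiplicity at least two. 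Hence it suffices to produce, for every sufficiently large $l$, a prime $p\in\mathbf{P}(C)$ (outside a fixed finite bad set) with $v_p(l!_S)=1$.

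If $a=0$, then $b\neq 0$, $S=\{bn+c\mid n\in\mathbf Z\}$, and $l!_S = b^l\cdot l!$, so for every prime $p\nmid b$ one has $v_p(l!_S)=v_p(l!)$; the problem reduces directly to the Bertrand-type argument used in Theorem \ref{main1}.

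For the main case $a\neq 0$, I analyze $v_p(l!_S)$ at large odd primes $p$ coprime to $2a(b^2-4ac)F(0,1)\Delta_F$. Completing the square writes $f(x)=a(x+b/(2a))^2+(4ac-b^2)/(4a)$, so the reduction of $f$ modulo $p^k$ is, up to an affine change of variable, multiplication by $a$ on the squares of $\mathbf Z/p^k$. In particular $\alpha_1 := |f(\mathbf Z)\bmod p|=(p+1)/2$, and the fiber of $f\bmod p$ has size $2$ over every non-branch value and size $1$ over the branch value $(4ac-b^2)/(4a)\bmod p$. Constructing a $p$-ordering $(a_i)$ greedily, the first $(p+1)/2$ terms can be placed into pairwise distinct mod-$p$ residues of the image of $f$, so $v_p(l!_S)=0$ for $l<(p+1)/2$. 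For the next $(p-1)/2$ steps, the greedy choice places one additional element in each non-branch residue with a mod-$p^2$ lift not already used—possible because each non-branch fiber supplies $p$ distinct mod-$p^2$ values—producing exactly one mod-$p$ coincidence and no deeper one at each step. This yields $v_p(l!_S)=1$ for every $l\in[(p+1)/2,\,p)$; for $l\ge p$ a second repeat must occur (or the branch residue is revisited, where all preimages agree mod $p^2$), giving $v_p(l!_S)\ge 2$.

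To finish, apply Lemma \ref{pp} with $A=2$ to the class $C$: for $l$ exceeding an effective threshold, there is a prime $p\in\mathbf{P}(C)$ in $(l,2l)$. After discarding the finitely many bad primes, such a $p$ satisfies $l\in[(p+1)/2,\,p)$, so $v_p(l!_S)=1$, contradicting Lemma \ref{normrep}. The main obstacle is the detailed $p$-ordering analysis: one must verify that the greedy $p$-optimal construction actually realizes $v_p=1$ on the full interval $[(p+1)/2,\,p)$, which requires the fibers of $f\bmod p^k$ to remain non-degenerate. This is precisely what the exclusion of primes dividing $2a$ and $b^2-4ac$ ensures.
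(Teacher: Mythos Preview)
Your proof is correct and follows essentially the same route as the paper: both dispose of the case $a=0$ via $l!_S=b^l l!$, and for $a\neq 0$ both establish $v_p(l!_S)=1$ on the range $(p+1)/2\le l<p$ by constructing an explicit $p$-ordering from the $(p+1)/2$ residues of $f$ modulo $p$, then invoke a Bertrand-type estimate for primes in $\mathbf P(C)$. The only cosmetic differences are that the paper phrases the Bertrand step as an inductive chain of primes rather than directly locating $p\in(l,2l)$ for each $l$, and it does not exclude primes dividing $b^2-4ac$ (an exclusion you do not actually need for the fiber analysis, and which would require a separate remark in the degenerate case $b^2=4ac$).
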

\begin{proof}
As we remarked above, when $a=0$, we find that $l!_S=b^ll!$ and the same {argument as in} the proof of Theorem \ref{main1} also works for the equation $P(x)=l!_{S}$. Therefore, it suffices to show the case $a\neq0$. Let $p$ be an odd prime not dividing $a$. Then we have \[\#\{f(n)~|~n\in\mathbf Z\}=\frac{p+1}2\]
and we can choose an ordering $f(n_0),\ldots,f(n_{p-1}),\ldots$ satisfying the following three conditions:
\begin{enumerate}
\item $\{n_0,\ldots,n_{p-1}\}=[0,p-1]\cap\mathbf Z$;
\item $f'(n_0)\equiv0\mod p$;
\item For $0\le i<j\le\frac{p-1}2$, $f(n_i)\not\equiv f(n_j)\mod p$.
\end{enumerate}
This ordering forms a $p$-ordering of $S$ and we can estimate $v_p(l!_S)$ as
\begin{equation}\label{bhaest}v_p(l!_S)=\left\{\begin{array}{ll}
0&\text{if } 0\le l\le\frac{p-1}2,\\
1&\text{if } \frac{p+1}2\le l\le p-1,\\
2&\text{if } l=p.
\end{array}\right.\end{equation}
{Following} the notation in Lemma \ref{normrep}, we denote \[F(x_1,x_2)=N_{\alpha_1,\ldots,\alpha_n}(x_1,x_2,0,\ldots,0).\]
Lemma \ref{normrep} leads {to the conclusion} that if $N$ is represented {by} $N_{\alpha_1,\ldots,\alpha_n}$ and $p|N$ for odd prime $p$ corresponding to $C\in\mathcal C_F$ with $\gcd(q_i,F(0,1)\Delta_{F})$ $=1$, then $N$ is divisible by $p^2$ at least. In particular, $N_{\alpha_1,\ldots,\alpha_n}(x_1,\ldots,x_n)$ $=\frac{p+1}2!_S$ has no integer solution $(x_1,\ldots,x_n)$.
Moreover, since the second smallest $p$-factor appears at $l=p$ by estimate (\ref{bhaest}), $l!_S$ is not of the form in Lemma \ref{normrep} for $\frac{p+1}2\le l<p$, that is, there exists no $n$-tuple $(x_1,\ldots,x_n)\in\mathbf Z^n$ such that $N_{\alpha_1,\ldots,\alpha_n}(x_1,\ldots,x_n)=l!_S$ for $\frac{p+1}2\le l<p$.

As {in} the proof of Theorem \ref{main1}, for sufficient large prime $p$, there exists a prime $q$ corresponding to $C$ with $\frac{q+1}2\in (\frac{p+1}2,p)$ and $l!_S$ is not of the form in Lemma \ref{normrep} for $\frac{q+1}2\le l<q$. By induction, $l!_S$ is not of the form in Lemma \ref{normrep} for $\frac{p+1}2\le l$. This shows the finiteness of $l$ such that $l!_S$ is represented {by} $N_{\alpha_1,\ldots,\alpha_n}(x_1,\ldots,x_n)$.
\end{proof}
The case $\deg f\ge3$, it depends on the base field $K$. For example, when $f(x)=x^3$ then we find
\begin{equation}
\label{cubicbha}\#\{n^3\mod p~|~n\in\mathbf Z\}=\left\{\begin{array}{cl}
\frac{p+2}3&\text{if } p\equiv1\mod3,\\
p&\text{otherwise.}
\end{array}\right.\end{equation}
If $K/\mathbf Q$ is an abelian extension, then {there} exists a positive integer $D$ which characterizes the set of primes corresponding to a conjugacy class $C\subset{\rm Gal}(K/\mathbf Q)$. Therefore, for any norm form $N_A$ of $K$, we can show the finiteness of solutions for $N_A(x)=l!_S$.
On the other hand, if $K/\mathbf Q$ is not an abelian extension, then we cannot characterize the set of primes corresponding to a conjugacy class $C\subset{\rm Gal}(K/\mathbf Q)$ by any modulus and it is difficult to show the finiteness of solutions in general.
\section{acknowledgement}
The author deeply express their sincere gratitude to Professor M. Ram Murty and Professor Andrzej D{\k{a}}browski for fruitful discussions.
The author also deeply thanks Professor Kohji Matsumoto and Professor Masatoshi Suzuki for their precious advice.
This work was supported by Grant-in-Aid for JSPS Research Fellow (Grant Number: 19J10705).

\end{document}